\newtheorem{theorem}{Theorem}
\newtheorem{lemma}[theorem]{Lemma}
\newtheorem{coro}[theorem]{Corollary}
\newtheorem{prop}[theorem]{Proposition}
\newtheorem*{OP}{Open problems}
\theoremstyle{definition}
\newtheorem{remark}[theorem]{Remark}
\newcommand{\eps}{\varepsilon}
\renewcommand{\phi}{\varphi}
\newcommand{\N}{\mathbb{N}}
\newcommand{\R}{\mathbb{R}}
\newcommand{\norm}[1]{\left\Vert#1\right\Vert}
\newcommand{\ipr}[1]{\left\langle#1\right\rangle}
\newcommand{\set}[1]{\Bigl\{#1\Bigr\}}
\newcommand{\be}{\begin{equation}}
\newcommand{\ee}{\end{equation}}
\newcommand{\co}{\colon\, }
\newcommand{\E}{\mathbb{E}}
\renewcommand{\P}{\mathbb{P}}
\newcommand{\calL}{\mathcal{L}}
\newcommand{\detada}{\text{\rm det}}
\newcommand{\detnon}{\text{\rm det-non}}
\newcommand{\detnonmb}{\text{\rm det-non-mb}}
\newcommand{\ranada}{\text{\rm ran}}
\newcommand{\rannon}{\text{\rm ran-non}}
\newcommand{\A}{\mathcal{A}}
\newcommand{\Adet}{\mathcal{A}_n^\detada}
\newcommand{\Adetnon}{\mathcal{A}_n^\detnon}
\newcommand{\Aran}{\mathcal{A}_n^\ranada}
\newcommand{\Arannon}{\mathcal{A}_n^\rannon}
\newcommand{\APP}{{\rm APP}}
\DeclareMathOperator{\codim}{codim}
\renewcommand\subsection{\@startsection{subsection}{2}%
  \z@{-.5\linespacing\@plus-.7\linespacing}{.5\linespacing}%
  {\normalfont\bfseries}}
\title[Power of adaption and 
randomization]{On the power of adaption 
and randomization}
\author{David Krieg, Erich Novak, and Mario Ullrich}
\date{\today}
\keywords{information-based complexity, 
optimal algorithms, widths, \mbox{s-numbers}}
\begin{document}

\begin{abstract} 
We present bounds on the maximal gain of adaptive and 
randomized algorithms over non-adaptive, 
deterministic ones for 
approximating linear operators on convex sets. 
If the sets are additionally symmetric, 
then our results are optimal. 
For non-symmetric sets, we 
unify some notions of $n$-widths and s-numbers, 
and show their connection to minimal errors. 
We also discuss extensions to non-linear widths and approximation based on function values, and conclude with a list of open problems.
\end{abstract}

\maketitle

\section{Introduction and summary}

Let $X$ and $Y$ be (real or complex) Banach spaces, 
$S\in \calL(X, Y)$, i.e., a continuous linear mapping between $X$ and $Y$, 
and $F\subset X$.
We usually assume that $F$ is convex, and sometimes also that $F$ is symmetric. 
The goal is to approximate $S(f)$ for arbitrary $f\in F$ by an algorithm \mbox{$A_n\co F\to Y$} 
that has access to the values of at most $n$ linear functionals (aka measurements) applied to $f$, see Section~\ref{sec:algorithms} for precise definitions. 
Here, we ask the following question: 
\medskip

\begin{center}
How much can be gained by choosing the functionals \\ adaptively and/or randomly?
\end{center}

\medskip 

In this paper, we present several upper bounds on the largest possible gain. 
In the case that $F$ is not only convex, but also symmetric, 
we can apply known
relations of minimal worst-case errors and s-numbers as well as inequalities between different s-numbers. 
In the case that $F$ is only convex, much less is 
known and new concepts are required. 
Such non-symmetric model classes
$F$ appear quite naturally, 
for example, if the problem instances $f\in F$ 
are non-negative, monotone or convex functions. 
They may behave very differently compared to symmetric classes, as we discuss below. 
We also consider the maximal gain if only one of the two features, i.e., adaption or randomization, is allowed, and present an upper bound 
if the measurements
are given by $n$ function evaluations instead of arbitrary linear functionals.

\medskip

Let us now describe the state of the art and our main results in more detail. We start by discussing 
the power of adaption:  
How much better are algorithms 
that are allowed to choose information 
successively depending on already observed information, 
compared to those that apply the same $n$ measurements to all inputs? 
This is sometimes called the ``adaption problem''.
Note that we compare all algorithms that use the same amount of information, 
regardless of their computational cost.

\medskip

In the deterministic setting, if $F$ is additionally symmetric,
it is known that the answer is \emph{almost nothing}.
More precisely, the minimal worst-case error that can be achieved with adaptive algorithms
improves upon the one achievable 
with non-adaptive algorithms by a 
factor of at most two,
see \cite{Ba71,CW04,GM80,No96,NW1,TW80,TWW88}. 
For non-symmetric sets, it was 
proved in \cite{Novak95b}
that the largest possible gap between those errors 
is bounded above by $4 (n+1)^2$.

\medskip

For a long time,  
it was not known whether adaption helps for randomized 
algorithms if the input set $F$ is convex and symmetric. 
The problem was posed in \cite{No96} and
restated in \cite[Open Problem 20]{NW1}.
This open problem was recently solved in the affirmative by Stefan Heinrich~\cite{He24a,He24b,He24c,He24d} 
who studied (parametric) integration and approximation in 
mixed $\ell_p(\ell_q)$-spaces using standard information (function evaluations).
We stress that in this paper we mainly 
consider arbitrary linear information, 
hence the setting 
is different. 

\medskip 

For randomized algorithms using arbitrary linear information, the paper \cite{KNW24} shows that one may gain by adaption 
a factor of main order $n^{1/2}$ 
for the embedding $S\colon \ell_1^m \to \ell_2^m$
if $F$ is the unit ball of~$\ell_1^m$.
It is proved in \cite{KW24} that the same gain occurs for the embedding $S\colon \ell_2^m \to \ell_\infty^m$ and one may even gain a factor of main order $n$ for the embedding $S\colon \ell_1^m \to \ell_\infty^m$.
In these results, the dimension $m$ is chosen in (exponential) 
dependence of $n$ and hence the problem $S$ depends on $n$.
Both papers also show how one can obtain from this a single infinite-dimensional problem,
where adaption gives a speed-up of the respective main order for all $n\in \N$
by using a construction similar to the one proposed in \cite{He24d}.

\medskip

In this paper,
we give upper bounds for 
the maximal gain of randomized adaptive algorithms (the most general kind)
over
deterministic non-adaptive algorithms (the least general kind).
We denote the corresponding $n$-th minimal worst case errors 
for approximating $S$ over~$F$ 
by $e_n^\ranada(S,F)$ and $e_n^\detnon(S,F)$, see Section~\ref{sec:algorithms}. 
Our main result reads as follows;
see Theorem~\ref{thm:intro-strong} for a slightly stronger version and its proof.

\begin{theorem}\label{thm:intro}
Let $X$ and $Y$ be Banach spaces and $S\in\calL(X,Y)$. For every convex $F\subset X$ and $n\in\N$, we have
\[
e_{2n-1}^\detnon(S,F) 
\;\le\; 12\, n^{3/2}\,\bigg(\prod_{k<n} e_k^\ranada(S,F) \bigg)^{1/n}.
\]
In special cases, the following 
improvements hold:
\begin{enumerate}[a)]
    \item if $F$ is symmetric, we can replace $n^{3/2}$ with $n$,
    \item if $Y$ is a Hilbert space, we can replace $n^{3/2}$ with $n$,
    \item if $F$ is symmetric and $Y$ a Hilbert space, replace $n^{3/2}$ with $n^{1/2}$,
    \item if $X$ is a Hilbert space and $F$ its unit ball, we can replace $n^{3/2}$ with $n^{1/2}$ if we additionally replace the index $2n-1$ with $4n-1$.
\end{enumerate}
\end{theorem}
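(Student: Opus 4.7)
The plan is to reduce both sides of the inequality to classical widths of the image set $S(F)$ and then close the loop with a Carl-type multiplicative inequality between them. Concretely, I would sandwich
\[
e_{2n-1}^\detnon(S,F) \;\le\; \alpha_n\,c_n(S,F),
\qquad
b_{2k}(S,F) \;\le\; C\,e_k^\ranada(S,F),
\]
where $c_n(S,F)$ is a Gelfand-type width of $F$ under $S$ (the Chebyshev radius of $S(F\cap N)$ minimised over codimension-$n$ subspaces $N\subset X$), $b_m(S,F)$ is the analogous Bernstein width, $C$ is a universal constant, and the prefactor $\alpha_n$ depends on the symmetry of $F$ and the geometry of $X,Y$. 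The five different prefactors in the theorem will come from five different choices of $\alpha_n$.

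First, I would bound the deterministic non-adaptive error from above by a Gelfand-type width. For symmetric $F$ this reduction is classical and loses only a constant; for non-symmetric $F$ it passes through a centering or Minkowski-symmetrisation step that costs a factor of order $\sqrt{n}$. Independently, a Hilbert target $Y$ saves a further $\sqrt{n}$ via a Chebyshev-centre argument, since one can then replace the diameter by the circumradius of the image. These two independent savings are what produce the pattern of prefactors in the general case and in (a)--(c). Case~(d) uses instead the Hilbert structure on $X$ to identify $F$ with a Euclidean ball and to exploit sharper Gaussian lower bounds, at the cost of doubling the dimension of the witness subspace, which is why the index has to be enlarged from $2n-1$ to $4n-1$.

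Second, I would establish the lower bound $b_{2k}(S,F) \le C\,e_k^\ranada(S,F)$ by the standard information-theoretic argument. If a $2k$-dimensional subspace $V\subset X$ meets $F$ in a set that $S$ maps isomorphically onto a ball of radius $r$, then any randomised adaptive algorithm using at most $k$ linear queries must fail to distinguish certain $2k$-dimensional signals from suitable sign-flipped counterparts, so its worst-case expected error is bounded below by a universal constant times $r$. This step is insensitive to the case distinctions and follows from the width--error dictionary that the paper develops earlier.

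The third and decisive step is a Carl-type multiplicative inequality of the shape
\[
c_n(S,F) \;\le\; C\,n^{1/2}\,\Big(\prod_{k<n} b_k(S,F)\Big)^{1/n},
\]
which controls the Gelfand width at index $n$ by a geometric mean of Bernstein widths of smaller indices, with an extra $\sqrt{n}$ loss. Chaining the three estimates, rescaling the Bernstein index from $k$ to $2k$, and absorbing all universal constants into the leading $12\,n^{3/2}$ then yields the claim, with the shift $2n-1$ (respectively $4n-1$) tracking the dimensions consumed in the two sandwich steps. The main obstacle will be proving the Carl-type product inequality with the correct indices and a constant independent of $n$; by contrast, the two sandwich estimates, although technical in the non-symmetric case, are refinements of well-known arguments in information-based complexity and should fall out of the width machinery set up in the body of the paper.
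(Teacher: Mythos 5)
Your blueprint --- sandwich $e^{\detnon}$ above by a Gelfand-type width and $e^{\ranada}$ below by a Bernstein- or Hilbert-type width, then close with a Carl-type product inequality --- is indeed the paper's plan, but you have placed the $n$-dependence and the whole case split in the wrong link of the chain. The reduction from minimal error to Gelfand width is $e_n^{\detnon}(S,F)\le 2\,c_n(S,F)$ for \emph{every} $F\subset X$, with a fixed factor of $2$ (Proposition~\ref{prop:upper-c}): nothing is lost for non-symmetric $F$, nothing is saved for a Hilbert target $Y$, so the $\sqrt n$ costs and savings you attribute to a centering/Chebyshev-centre step do not exist. All the $n$-factors in the theorem, and the entire case analysis (a)--(d), come instead from the width inequality Theorem~\ref{thm:widths}, which has exponent $3/2$ in general and drops to $1$ or $1/2$ precisely under symmetry of $F$ and/or Hilbertness of $Y$. (Likewise, the $c_{2n}$ oversampling in case~(d) comes from needing $2n$ codimensions so that the witness vectors $p_k$ lie in the information kernel \emph{and} are pairwise orthogonal in the Hilbert input space --- Gaussian lower bounds play no role.)

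The Carl-type estimate you posit, $c_n(S,F)\lesssim n^{1/2}\bigl(\prod_{k<n} b_k(S,F)\bigr)^{1/n}$ with exponent $1/2$ and Bernstein numbers in full generality, is not what the paper proves, and it cannot be true: combined with the actual factor-$2$ bound $e_n^{\detnon}\le 2c_n$ and the Bernstein lower bound \eqref{eq:Kunsch}, it would cap the maximal gain at $1/2$ in the symmetric Banach case, contradicting the gain of $1$ realized by $\ell_1^m\to\ell_\infty^m$ from \cite{KW24}. The paper instead bounds $c_n$ by the geometric mean of the \emph{Hilbert} numbers $h_k\le b_k$ (Theorem~\ref{thm:widths}), which is stronger information at the price of a worse exponent $3/2$ in general, and feeds in the matching lower bound $e_n^{\ranada}(S,F)\ge\tfrac12\,h_{2n-1}(S,F)$ (Proposition~\ref{prop:lower-h}). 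To salvage your outline you would have to move the $\sqrt n$ savings out of the error-to-width step and into the Carl inequality, work with $h_k$ rather than $b_k$, and replace your uniform $n^{1/2}$ by the case-dependent exponents that the width machinery actually delivers.
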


Although these bounds are of a non-asymptotic nature, see Corollary~\ref{cor:Carl}, they might be most easily understood in terms of the polynomial rate of convergence. For this, one has to realize that the geometric mean on the right hand side has the same polynomial rate of convergence as the error numbers $e_n^\ranada$. Hence, we find that adaption and randomization improve the rate of convergence by no more than 1 in the symmetric case
and by no more than $3/2$ in the non-symmetric case.
This maximal improvement is further reduced by $1/2$ 
if either the input or the target space is a Hilbert 
space.
In the case that $X$ and $Y$ are Hilbert spaces and $F$ the unit ball of $X$, then there is no gain (up to constants), see~\cite{Novak92} and Lemma~\ref{lemma92}.

\medskip

By recalling the aforementioned 
results from \cite{KNW24,KW24}, 
we see that our results for the polynomial rate 
of convergence 
are sharp in the case of symmetric classes~$F$. 
We summarize the new state of the art for the
adaption and randomization problem
in Table~\ref{table3}. 
The same results hold for the 
adaption problem in the randomized setting. 
See also Section~\ref{sec:ind} and 
Table~\ref{table:individual} for an individual 
discussion of adaption and randomization. 
For comparison, 
recall that adaption gives no speed-up 
for deterministic algorithms for all convex and 
symmetric classes $F$.

\begin{table}[ht]
\renewcommand{\arraystretch}{1.3}
\begin{center}
\begin{tabular}{|c||c|c|c|}
\hline
 \backslashbox{$Y$}{$F$} & \begin{tabular}{c} unit ball of a \\[-4pt] Hilbert space \end{tabular} & convex \& symmetric & only convex \\\hline\hline
 \begin{tabular}{c} Hilbert \\[-4pt] space \end{tabular} & no gain & 1/2 & $\le 1$ \\\hline
 \begin{tabular}{c} Banach \\[-4pt] space \end{tabular} & 1/2 & 1 & $\le 3/2$ \\\hline
 \end{tabular}
\end{center}
\medskip

\caption{Maximal gain in the rate of convergence of adaptive randomized over non-adaptive deterministic algorithms using linear information. The same table applies for the comparison of adaptive randomized with non-adaptive randomized algorithms.
}
\label{table3}
\end{table}


\medskip

A crucial tool in our analysis are 
inequalities between \emph{s-numbers}
of operators, 
see e.g.~\cite{Pie87,Pie07}, 
and between variants of those numbers for the non-symmetric case, 
see Section~\ref{sec:widths}. 


Indeed, the \emph{Gelfand numbers} $c_n$ 
characterize the error $e_n^{\detnon}$ 
of deterministic and non-adaptive algorithms up to a factor of two.
On the other hand,
it is known that the \emph{Bernstein numbers} $b_n$
are a lower bound for the error of 
deterministic and adaptive algorithms, 
see e.g.~\cite{Novak95b}.
More recently, 
based on earlier results of \cite{He92},
it has been proven in \cite{Ku16,Ku17} that also 
the error $e_n^{\ranada}$ of adaptive randomized algorithms is 
bounded below by the Bernstein numbers. 
Hence, one can obtain bounds for the ratio $e_n^{\detnon}/e_n^{\ranada}$ 
from corresponding bounds involving $c_n$ and $b_n$, 
which is the approach of this paper.


For the symmetric case, such bounds follow from the already available
estimates on the maximal difference between arbitrary s-numbers, 
see \cite{Pie87,Pie07} and the recent paper~\cite{U24}. 
For the non-symmetric case, we will use similar concepts and proof ideas. 
In particular, we will introduce the Hilbert width $h_n$ as a substitute of the Hilbert numbers, i.e., the smallest s-numbers, and prove bounds between $c_n$ and $h_n$
similar to our Theorem~\ref{thm:intro},
see Theorem~\ref{thm:widths}.

\medskip

There are many questions which remain unanswered,
even despite all the recent progress on the matter of adaption and randomization.
For instance, Table~\ref{table3} neglects any logarithmic factors 
and it is probably a very hard problem to determine the correct behavior of the maximal gain
including logarithmic factors, even in the symmetric case. 
In the non-symmetric case, we do not even know the right polynomial order of the maximal gain. 
Moreover, what is the maximal gain of non-adaptive randomized algorithms over non-adaptive deterministic algorithms?
We give a list of open problems in Section~\ref{sec:OPs}.

\medskip

Possibly the most interesting open problem is the following: 
How do the results change if we switch from algorithms that 
use arbitrary linear functionals to algorithms 
that are only allowed to use function evaluations?  
(In information-based complexity this type of information is called \emph{standard information}.)
We guess that the results 
are, under suitable conditions,  quite similar, 
but so far did not find the right ideas for a proof.  


There are results on this question, but mostly for
particular $S$ and~$F$. 
The techniques of our paper can be easily 
adapted to standard information 
in the case of 
uniform approximation on convex
subsets of $B(D)$, the space of bounded functions on a set $D$. 
That is, we consider $X=Y=B(D)$, 
equipped with the sup-norm on $D$, and 
$S= \APP_{\infty}$
being the identity on $B(D)$. 

We obtain that 
algorithms that only use function evaluations 
obey the same upper bounds as given in  Theorem~\ref{thm:intro}, see 
below and Section~\ref{sec:sampling}. 
%
Here, we only present the interesting special case that $F$ is convex and symmetric. In this case, 
it is known that we can restrict ourselves to linear algorithms, 
see~\cite{CW04} or~\cite[Thm.~4.8]{NW1}. 
Using this, we obtain bounds on the 
\emph{linear sampling numbers}. 
For $F\subset B(D)$, 
those are defined by 
\[
g_n^{\rm lin}(\APP_{\infty},F)
\;:=\; 
\inf_{\substack{x_1,\dots, x_n\in D\\ \phi_1,\dots,\phi_n\in B(D)}} \,
\sup_{f\in F} \,
\norm{f - \sum_{i=1}^n f(x_i)\, \phi_i}_{B(D)}.
\]
One might argue that 
linear sampling algorithms 
are 
the simplest  
type of algorithms, 
which are not only non-adaptive, deterministic and linear, but also only employ very restrictive (but natural) information. 

The following theorem bounds the error of linear sampling algorithms
with the error of general algorithms which 
may be nonlinear, randomized, adaptive, and based on arbitrary linear information.

\begin{theorem}\label{thm:intro-std}
Let $D$ be a set, $F$ be a convex and symmetric subset of~$B(D)$, and $\APP_{\infty}$ be the identity on $B(D)$.
Then, for all $n\in\N$, we have
\[
g_{2n-1}^{\rm lin}(\APP_{\infty},F) 
\;\le\; 6 n\,\bigg(\prod_{k<n} e_k^\ranada(\APP_{\infty},F) \bigg)^{1/n}.
\]
If $F$ is the unit ball of a Hilbert space, 
we can replace the factor $n$ with $n^{1/2}$ 
if we additionally replace the index $2n-1$ with $4n-1$.
\end{theorem}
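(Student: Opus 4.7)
The plan is to mimic the proof of Theorem~\ref{thm:intro}(a), modified for the setting $X = Y = B(D)$ with $S = \APP_\infty$ the identity, and restricted to standard (sampling) information. Two adjustments drive the factor-of-two improvement from $12n$ to $6n$: first, the bound of $e_n^{\detnon}$ by the Gelfand width costs a factor $2$ in general, whereas the analogous bound of $g_n^{\rm lin}$ by a suitable sampling Gelfand width costs only a factor $1$; second, the widths inequality used in the proof of Theorem~\ref{thm:intro} can be carried out using point evaluations alone.

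First, I introduce the sampling Gelfand width
\[
c_n^{\rm std}(F) \;:=\; \inf_{x_1,\ldots,x_n\in D}\; \sup\big\{\|f\|_{B(D)} : f\in F,\ f(x_i)=0 \text{ for all } i\big\},
\]
and show that for convex and symmetric $F\subset B(D)$,
\[
g_n^{\rm lin}(\APP_{\infty}, F) \;\le\; c_n^{\rm std}(F).
\]
The argument is a pointwise optimal-recovery computation: for each $x\in D$, select coefficients $c_1(x),\ldots,c_n(x)\in\R$ minimizing $\sup_{f\in F}|f(x) - \sum_i c_i(x) f(x_i)|$. By convexity and symmetry of $F$, a Hahn--Banach duality shows that this minimum equals $\sup\{|f(x)|: f\in F,\ f(x_i)=0\}$. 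Setting $\phi_i(x):=c_i(x)$, interchanging suprema, and using $\|\cdot\|_{B(D)} = \sup_{x}|\cdot(x)|$ yields the claim. Boundedness of the $\phi_i$ as elements of $B(D)$ can be arranged by a uniform Hahn--Banach extension or a standard compactness argument.

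Second, I establish a widths-type estimate
\[
c_{2n-1}^{\rm std}(F) \;\le\; 6\, n \,\bigg(\prod_{k<n} e_k^{\ranada}(\APP_\infty, F)\bigg)^{1/n},
\]
playing the role of the Gelfand-vs-Hilbert-width inequality from Section~\ref{sec:widths}. The plan is to revisit the proof of that inequality and, at every step, replace any appeal to an arbitrary functional in $B(D)^*$ by a point evaluation $\delta_x$. The structural fact enabling this is that point evaluations form a norming set in the dual, $\|f\|_{B(D)} = \sup_{x\in D}|\delta_x(f)|$, so any extremal-dual step can be localized to sample functionals. The lower bound on $e_k^{\ranada}$ by Bernstein-type widths carries over unchanged, since restricting the allowed information can only enlarge the randomized error.

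The Hilbert-ball case follows the same outline, invoking the stronger widths inequality behind Theorem~\ref{thm:intro}(d): when $F$ is the unit ball of a Hilbert space continuously embedded in $B(D)$, the factor $6n$ improves to $6n^{1/2}$ at the cost of replacing the index $2n-1$ by $4n-1$; the pointwise optimal-recovery step above is unchanged. The main obstacle is the second step: a careful verification that every stage of the widths inequality can be run with point evaluations in place of arbitrary linear functionals. The paper signals that this adaptation is routine for the $B(D)$-setting, but it requires confirming that no step depends on a functional in $B(D)^*$ which cannot be represented, or suitably approximated, by point evaluations on the relevant convex subsets.
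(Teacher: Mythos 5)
Your proposal is correct and takes essentially the same approach as the paper. The paper formalizes your two steps through Theorem~\ref{thm:info}: it re-runs the proof of Theorem~\ref{thm:widths} with the Hahn--Banach functionals drawn from a norming set $\mathcal N\subset Y'$ (for $B(D)$, exactly the point evaluations, which is your key norming-set observation), and it obtains the loss-free bound $g_n^{\rm lin}(\APP_\infty,F)\le c_n(\APP_\infty,F,\Lambda^{\rm std})$ by citing \cite{CW04} and \cite[Sec.~4.1]{NW1} rather than by reproving the pointwise optimal-recovery fact as you do, but the content is identical.
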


\medskip

Theorem~\ref{thm:intro-std} is optimal in the sense that 
the factor $n$ cannot be replaced by a lower-order term.
This follows again by considering the embedding $S: \ell_1^m \to \ell_\infty^m$
as discussed in \cite{KW24}. 
See Section~\ref{sec:sampling} for some details, extensions, as well as remarks on this setting. 
Theorem~\ref{thm:intro-std} is proven by Theorem~\ref{thm:info},
a common generalization of Theorems~\ref{thm:intro} and \ref{thm:intro-std}.

\medskip


\medskip


\section{Algorithms and minimal errors}
\label{sec:algorithms}

In general, a deterministic algorithm $A_n\co F\to Y$ is an arbitrary mapping of the 
form $A_n=\phi_n\circ N_n$ 
with $N_n\co F\to \R^n$ being the \emph{information mapping}, 
and $\phi_n\co \R^n\to Y$ the \emph{reconstruction mapping}. 
We mostly pose no restriction at all on the mappings $\phi_n$ and focus on the form of $N_n$, 
see also Section~\ref{sec:OPs}. 
The most general form we consider is that an information mapping is given recursively by 
\[
N_n(f) = \left( N_{n-1}(f),\, L_n(f) \right), 
\]
where the choice of the $n$-th linear 
functional $L_n=L_n(\cdot,N_{n-1}(f))$ 
may depend on the first $n-1$ measurements. 
This is called an \emph{adaptive} choice of information, and 
we denote the collection of all such algorithms 
by $\Adet (F,Y)$, 
or just $\Adet$.


An algorithm is called \emph{non-adaptive} 
if $N_n=(L_1,\dots,L_n)$, 
i.e., the same functionals are used for every input, 
and we denote by $\Adetnon$ the corresponding class of algorithms.


Let us add that the assumption that measurements are given by linear functionals is very common in numerical analysis and approximation theory. However, also other concepts are possible. We shortly discuss this
in Section~\ref{sec:other-width}.

\medskip

For an algorithm $A_n\in\A_n^*$ with $*\in\{\detada,\detnon\}$, 
a mapping $S\co X\to Y$ and a set $F\subset X$, we define the 
\emph{worst-case error} of $A_n$ for approximating $S$ over $F$ by 
\[
e(A_n,S,F) \;:=\; \sup_{f\in F}
\|S(f)-A_n(f)\|_Y.
\]
(Note that we omit the $Y$ in $\|\cdot\|_Y$ when no confusion is possible.)

Randomized algorithms are random variables 
whose realizations are deterministic 
algorithms as described above. 

A randomized algorithm is a family of deterministic algorithms $A_n=(A_n^\omega)_{\omega\in\Omega}\subset\Adet(F,Y)$
which is indexed by a probability space $(\Omega,\mathcal{A},\P)$.
For technical reasons, 
we 
assume that the 
mapping 
$(f,\omega) \mapsto \|S(f)-A_n^\omega(f)\|_Y$
is $(\mathcal{B}_F\otimes \mathcal{A},\,\mathcal{B}_Y)$-measurable,
where $\mathcal{B}_Y$ denotes the Borel $\sigma$-algebra on $Y$,
the set $F$ is assumed to be convex, and $\mathcal{B}_F$ 
denotes the Borel $\sigma$-algebra of the topology 
associated with~$F$, 
i.e., with respect to the semi-norm 
whose unit ball is   
the convex and symmetric set $F-F$.
Then, formally, the desirable statement 
$\Adet\subset\Aran$ is not correct
since we do not assume that a deterministic algorithm 
has to be measurable. 
See \cite[Section~4.3.3]{NW1} and 
Section~\ref{sec:OPs} for a discussion 
of this technicality.

We denote the class of all such 
(possibly adaptive) algorithms by $\Aran(F,Y)$ 
and let $\Arannon(F,Y)$ 
be the class of randomized algorithms 
whose realizations are non-adaptive. 
Again, we may omit the dependence on $F$ and $Y$. 
We define the \emph{worst-case error}
of a randomized algorithm 
$A_n\in\Aran(F,Y)$ for approximating $S$ over $F$ by 
\[
e(A_n,S,F) \;:=\; \sup_{f\in F}\,
\E\, \|S(f)-A_n(f)\|_Y.
\] 

In order to compare the power of the 
just introduced types of algorithms, 
we now define the 
\emph{$n$-th minimal worst-case error} 
for approximating $S$ over $F$ by 
\[
e_n^{*}(S,F) \;:=\; \inf_{A_n\in \A_n^{*}}\,
e(A_n,S,F), 
\]
where $*\in\{\detada, \detnon, \ranada, \rannon\}$. 

\medskip

The respective concepts can 
indeed lead to very different minimal 
errors. 
Several examples, remarks and open problems 
will be presented in Section~\ref{sec:OPs}.

\section{Widths
and ${\rm s}$-numbers} 
\label{sec:widths}

\emph{Widths} 
have a long tradition in
approximation theory
and there is a whole range of widths of sets within normed spaces. 
See, e.g., Tikhomirov~\cite{Tikh60} and  Ismagilov~\cite{Ism74} 
for early treatments, 
and Pinkus~\cite{Pinkus85} 
or Lorentz et al.~\cite{LGM96} for books on the subject.
A somehow competing concept are \emph{s-numbers} of operators which play an important role in operator theory and geometry of Banach spaces, see 
Pietsch~\cite{Pie87,Pie07}. 
A short account of their 
history and potential differences 
can be found in~\cite[6.2.6]{Pie07}, see also \cite{EL13,He89}. 
Some of these widths and numbers have an obvious relation to algorithms, and hence to information-based complexity, 
while others are seemingly unrelated.
We will discuss some known relations in Section~\ref{sec:relation}.
However, we first study the relation of the relevant 
numbers
among each other. 

\medskip

We start by providing a common generalization of the above concepts. 
That is, we introduce 
various s-numbers of a mapping $S\in\mathcal{L}(X,Y)$ 
on a subset $F\subset X$. 
Alternatively, one may call them 
widths of a set $F\subset X$ with respect to a mapping $S\in\mathcal{L}(X,Y)$.

The original definitions of the corresponding widths of sets $F\subset X$ 
are obtained by considering the 
s-numbers of the identity ${\rm id}_X$ 
on $F\subset X$ (or the 
width of $F$ w.r.t.~${\rm id}_X$), 
while s-numbers of the operator $S$ are recovered by considering $F=B_X$
(or the width of $B_X$ w.r.t.\,$S$).

Here and in the following, the (closed) unit ball of $X$ is denoted by $B_X$ and 
the continuous dual space of $X$ by $X'$.

\medskip

We define the 
\emph{Gelfand numbers} of $S\in\mathcal{L}(X,Y)$ on $F\subset X$ 
by 
\[\begin{split}
c_n(S,F) 
\,&:=\,  
\inf_{L_1,\dots,L_{n}\in X'}\, 
\sup_{\substack{f,g\in F: \\ L_k(f)=L_k(g)}}
\frac12\, \bigl\|S(f)-S(g)\bigr\| \\
&=\,  
\inf_{\substack{M \subset X \text{ closed} \\ \codim(M)\le n}}\ 
\sup_{\substack{f,g\in F: \\ f-g\in M}}\,
\frac12\, \bigl\|S(f)-S(g)\bigr\|.
\end{split}\]
%
In particular, $c_0(S,F)=\frac{1}{2}{\rm diam}\bigl(S(F)\bigr)$.
Note that in the theory of s-numbers, 
there is usually an index shift 
of one and ``$s_n$'' 
is only considered for $n\ge 1$
(such that $s_1(S)=\|S\|=\frac{1}{2}{\rm diam}\bigl(S(B_X)\bigr)$). 
We use a different convention here 
because $n$ is used for 
the amount of information. 
It is well-known, and we will present the details in Section~\ref{sec:relation}, that the Gelfand numbers~$c_n$ are closely related to $e_n^{\detnon}$. 

\medskip

Other quantities that will serve as lower bounds for all minimal errors are the 
\emph{Bernstein numbers} of $S$ on $F$, 
which are defined by 
\[\begin{split}
b_n(S,F) \,:=
\sup_{\substack{\dim(V)=n+1 \\ S \text{ injective on } V}} 
\sup\big\{ &r>0 \,:\,  g+ B \subset F \text{ for some } g\in F \\[-17pt] 
&\text{and a ball } B \text{ of radius } r \text{ in } (V,\Vert\cdot\Vert_S) \big\}.
\end{split}\]
Here, 
we consider the norm on the linear space $V$ that is induced by $S$, i.e., $\Vert x \Vert_S := \Vert Sx \Vert_Y$.
If $F$ is convex and symmetric, it suffices to consider balls centered at the origin in the above definition.
We note again that these numbers coincide with the classical Bernstein widths
if $S$ is the identity on $X$.
In the special case that $F$ is a bounded subset of $X$, it is not hard to verify that we have the handy formula
\[
\begin{split}
b_n(S,F) 
\,&=\, \sup_{\substack{V\subset X \text{ affine}\\\dim(V)=n+1}}\, \sup_{g\in F\cap V} \,\inf_{f\in V\cap (X\setminus F)} \, \|S(f)-S(g)\|. 
\end{split}
\]
%

\begin{remark}
The paper \cite{Novak95b} considers instead 
the Bernstein widths of the set $S(F)$ in $Y$, i.e.,
the radius of the largest $(n+1)$-dimensional 
ball contained in $S(F)$.
This coincides with the Bernstein 
numbers of~$S$ on~$F$
as defined above
if $S$ is injective. If $S$ is not injective, 
the widths of the set $S(F)$ may be larger. 
\end{remark}

\medskip

We will see in Section~\ref{sec:relation}, that one obtains bounds on 
$e_n^{\rm \detnon}/e_n^{\ranada}$ 
from corresponding bounds involving $c_n$ and $b_n$, which is the approach of this paper.
For this, we want to employ proof ideas that have already been used for bounding the maximal difference between s-numbers, 
see e.g.~\cite{Pie87,Pie07}. 
Inspired by Hilbert numbers, see~\cite{Bauhardt}, which are the smallest s-numbers, we introduce the 
\emph{Hilbert numbers} of $S\in\mathcal{L}(X,Y)$ on $F\subset X$
by 
\[
\begin{split}
h_n(S,F) \,:=\, \sup\Big\{&c_n\bigl(BSA, B_{\ell_2}\bigr)\co B\in\calL(Y,\ell_2) \text{ with } \Vert B\Vert \le 1,\\ 
 &A\in \mathcal{L}(\ell_2,X) \text{ and } x\in F \text{ with } A(B_{\ell_2}) + x \subset F \Big\}.
%
\end{split}
\]
In this definition, we can replace $c_n$ with $b_n$ since both numbers coincide for operators $T\in \mathcal{L}(\ell_2,\ell_2)$; they are both equal to the $(n+1)$-st singular value of $T$,
see, e.g., \cite{Pietsch-s}.

\medskip

One of the key ingredients to our results will be bounds between these numbers. 
First, note that they are related in the same way as 
the corresponding s-numbers, 
see~\cite{Bauhardt,Pietsch-s}. 

\begin{prop} \label{prop1}
Let $X$ and $Y$ be Banach spaces and $S\in\calL(X,Y)$. 
For every convex $F\subset X$ and $n\in\N_0$, 
we have 
\[
h_n(S,F) \;\le\; b_n(S,F) \;\le\; c_n(S,F). 
\]
Equalities hold if $X$ and $Y$ are Hilbert spaces and $F=B_X$.
\end{prop}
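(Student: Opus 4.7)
The plan is to establish the two inequalities separately by dimension-counting arguments, and then handle the equality case for Hilbert spaces via a singular-value-decomposition construction.

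For $b_n(S,F)\le c_n(S,F)$, I would fix a witness for $b_n(S,F)$: an $(n+1)$-dimensional subspace $V\subset X$ on which $S$ is injective, together with $g\in F$ and radius $r$ such that the $(V,\|\cdot\|_S)$-ball of radius $r$ around the origin, translated by $g$, lies in $F$. (An off-center ball $c+r\,B_{(V,\|\cdot\|_S)}$ can be absorbed into $g$, since $g+c\in g+B\subset F$.) For any closed $M\subset X$ with $\codim(M)\le n$, dimension counting gives a nonzero element of $M\cap V$, which I scale to $w\in M\cap V$ with $\|Sw\|=r$. Then $g\pm w\in F$, their difference $2w$ lies in $M$, and $\tfrac12\|S(g+w)-S(g-w)\|=r$. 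Taking the infimum over $M$ in the second form of $c_n$, and the supremum over witnesses, proves the inequality.

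For $h_n(S,F)\le b_n(S,F)$ I would use the equivalent definition of $h_n$ with $b_n$ in place of $c_n$. Fix a valid triple $(A,B,x)$ for $h_n$, and a witness for $b_n(BSA,B_{\ell_2})$: a subspace $V_0\subset\ell_2$ with $\dim V_0=n+1$ and $BSA|_{V_0}$ injective, together with $g_0\in B_{\ell_2}$ and $r>0$ such that $g_0+\{v\in V_0:\|BSAv\|\le r\}\subset B_{\ell_2}$. Set $W:=A(V_0)\subset X$ and $g:=Ag_0+x\in F$. Injectivity of $BSA$ on $V_0$ forces both $A|_{V_0}$ and $S|_W$ to be injective, so $\dim W=n+1$. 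The key observation is $\|BSAv\|\le\|B\|\,\|SAv\|\le\|SAv\|$, so any $w=Av\in W$ with $\|Sw\|\le r$ satisfies $g_0+v\in B_{\ell_2}$, whence $g+w=A(g_0+v)+x\in A(B_{\ell_2})+x\subset F$. Therefore $(W,g,r)$ witnesses $b_n(S,F)\ge r$, and passing to the supremum over all choices yields the bound.

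For the equality when $X$ and $Y$ are Hilbert spaces and $F=B_X$, it is classical that $c_n(S,B_X)=s_{n+1}(S)$, the $(n+1)$-st singular number, which can be characterized as $\inf\{\|S|_M\|:M\subset X \text{ closed},\ \codim(M)\le n\}$. Since $h_n\le b_n\le c_n$ from the above, it suffices to show $h_n(S,B_X)\ge s_{n+1}(S)$. For $\eps>0$, the min-max characterization produces an $(n+1)$-dimensional $U\subset X$ on which $\|Su\|\ge (s_{n+1}(S)-\eps)\|u\|$ for all $u\in U$. Let $A:\ell_2\to X$ send $e_1,\dots,e_{n+1}$ to an orthonormal basis of $U$ and the remaining basis vectors to $0$, and let $B:Y\to\ell_2$ be the orthogonal projection onto $S(U)$ composed with an isometry onto $\Span(e_1,\dots,e_{n+1})\subset\ell_2$. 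Then $\|A\|=1$, $\|B\|\le1$, $A(B_{\ell_2})\subset B_X$, and $BSA$ restricts to an isomorphism of $\Span(e_1,\dots,e_{n+1})$ whose inverse has operator norm at most $(s_{n+1}(S)-\eps)^{-1}$, so $c_n(BSA,B_{\ell_2})\ge s_{n+1}(S)-\eps$. Letting $\eps\to 0$ gives the claim. The only mild obstacle is making sure the singular-value machinery behaves well for non-compact operators on possibly non-separable Hilbert spaces, which is sidestepped entirely by working with the finite-dimensional $U$ above.
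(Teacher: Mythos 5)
Your proof is correct and follows essentially the same strategy as the paper for both inequalities. For $b_n\le c_n$, you work with the subspace form of $c_n$ while the paper uses the functional form, but the argument is identical in substance: pick a nonzero element of $M\cap V$ (or of the common kernel of $L_1,\dots,L_n$ inside $V$), rescale it to $\|\cdot\|_S$-norm $r$ using the assumed injectivity of $S$ on $V$, and use the recentered ball witness $g\pm w\in F$. For $h_n\le b_n$ you set $W=A(V_0)$, note that injectivity of $BSA$ on $V_0$ propagates to $A|_{V_0}$ and $S|_W$, and push the ball forward via $\|BSAv\|\le\|SAv\|$; this matches the paper's argument step for step.

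The one genuine difference is that you supply a proof of the equality case (for Hilbert spaces and $F=B_X$), constructing $A$ and $B$ explicitly from an $(n+1)$-dimensional subspace on which $S$ is bounded below, whereas the paper simply cites the classical coincidence of all $s$-number scales on Hilbert-space operators (Pietsch). Your construction is a clean way to get $h_n\ge s_{n+1}$ directly; the only point worth noting is that the existence of such a subspace $U$ for a non-compact bounded operator relies on the spectral theorem applied to $S^*S$ (range of the spectral projection $E([\alpha-\eps,\|S^*S\|])$ has dimension $\ge n+1$, else a rank-$n$ truncation would contradict $a_{n+1}(S)=\alpha$) — so your remark that the compactness issue is ``sidestepped'' is a bit optimistic, since the finite-dimensional $U$ itself is produced by this infinite-dimensional spectral argument. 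Still, the claim is true and your outline can be completed without difficulty.
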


\begin{proof}[Proof of Proposition~\ref{prop1}]
In order to prove $b_n \ge h_n$,
let $B\in\calL(Y,\ell_2)$ with $\Vert B \Vert \le 1$ as well as $A\in\calL(\ell_2,X)$ and $g\in X$ with $A(B_{\ell_2}) + g \subset F$.
For any $\beta < b_n\bigl(BSA, B_{\ell_2}\bigr)$,
there exists an $(n+1)$-dimensional linear space $V \subset \ell_2$ such that $BSA$ is injective on $V$
and for any $v\in V$ it holds that $\Vert BSAv \Vert_2 \le \beta$ implies $v\in B_{\ell_2}$.
First, we observe that the injectivity of $BSA$ implies that $S$ is injective on $W=A(V)$ and that $W$ is $(n+1)$-dimensional.
Moreover, let $f\in W$ with $\Vert Sf\Vert_Y \le \beta$.
Choose $v\in V$ with $f=Av$.
We have
\[
 \Vert BSAv\Vert_2 \le \Vert SAv \Vert_Y = \Vert Sf \Vert_Y \le \beta
\]
and hence $v\in B_{\ell_2}$.
By assumption, this implies $f+g\in F$ for all such $f$.
Hence, $F$ contains an $\Vert \cdot \Vert_S$-ball of radius $\beta$ in $W$ and we have
$b_n(S,F) \ge \beta$.
    Taking the supremum over all $\beta$ gives
    \[
     b_n(S,F) \,\ge\, b_n(BSA, B_{\ell_2})
    \]
and taking the supremum over all $B$, $A$ and $g$ as above gives
    \[
    b_n(S,F) \,\ge\, h_n(S,F).
    \]
    
In order to show $c_n \ge b_n$,
let $\beta<b_n(S,F)$ be arbitrary 
and let $V\subset X$ be an $(n+1)$-dimensional subspace 
such that $S$ is injective on $V$ 
as well as $m\in F$ such that
$h\in V$ and $\Vert Sh \Vert_Y \le \beta$ imply $m+h \in F$. 
Now, for all $L_1,\dots,L_n\in X'$, 
there must be some $h\in V\setminus\{0\}$ with $L_i(h)=0$ for all $i\le n$.
We choose $h$ such that $\Vert Sh \Vert_Y=\beta$, which implies $f=m+h \in F$ and $g=m-h \in F$.
Note that $L_i(f)=L_i(g)$ for all $i\le n$.
Moreover, $\frac12 \Vert Sf-Sg \Vert = \beta$ and hence $c_n(S,F)\ge \beta$.\\
\end{proof}
\medskip

Here, we prove the
following reverse inequalities,  
which are reminiscent to the corresponding bounds for s-numbers, 
see Remark~\ref{rem:known}.

\medskip

\begin{theorem}\label{thm:widths}
Let $X$ and $Y$ be Banach spaces and $S\in\calL(X,Y)$. 
For every convex $F\subset X$ and 
$n\in \N$, 
we have
\[
c_{n-1}(S,F) 
\;\le\;
\left(\prod_{k=0}^{n-1} c_k(S,F)\right)^{1/n}
\;\le\; n^{3/2}\,\left(\prod_{k=0}^{n-1} h_k(S,F)\right)^{1/n}.
\]
In special cases, the following improvements hold: 
\begin{enumerate}[a)]
    \item if $F$ is symmetric, we can replace the exponent $3/2$ with $1$,
    \item if $Y$ is a Hilbert space, we can replace the exponent $3/2$ with $1$,
    \item if $F$ is symmetric and $Y$ a Hilbert space, replace $3/2$ with $1/2$,
    \item if $X$ is a Hilbert space and $F$ its unit ball, we can replace the exponent $3/2$ with $1/2$ if we also replace all $c_{k}$ with $c_{2k}$. 
\end{enumerate}
\end{theorem}

\medskip

\begin{proof}[Proof of Theorem~\ref{thm:widths}] \label{proof:widths}
Let $S\in\mathcal{L}(X,Y)$ and $F\subset X$ be convex.

\medskip

\underline{General case}:
We first show that, for fixed $\eps>0$, we can find 
$f_0,g_0$, $f_1,g_1,\ldots \in F$ and 
$L_0,L_1,\ldots \in {X'}$ 
such that, with $p_k:=\frac{f_k-g_k}{2}$, we have $L_j(p_k)=0$ for $j<k$ and 
$(1+\eps) L_k(p_k)>c_k(S,F)$ for $k=0,1,\ldots$. 
See~\cite[p.~132]{Novak95b} for a similar proof. 

The proof is by induction. 
Let $k\in\N_0$ and assume that $f_j,g_j$ and $L_j$ for $j<k$ are already found. 
Define 
\[
M_k \,:=\, \set{p\in X\co L_j(p)=0 \;\;\text{ for }\; j< k}. 
\] 
Since $\codim{M_k}\le k$, we can choose $f_k,g_k\in F$ 
with $p_k\in M_k$ and 
\begin{equation}\label{eq:choosep}
(1+\eps)\,\|Sp_k\| \,\ge\, c_k(S,F).  
\end{equation}
By the Hahn-Banach theorem, there is $\lambda_k\in B_{Y'}$ with $\lambda_k(Sp_k)=\|Sp_k\|$
and hence 
\begin{equation} \label{proof1}
    \lambda_k(Sp_k) \;\ge\; (1+\eps)^{-1} \, c_k(S,F).
\end{equation}
We finish the induction step by setting $L_k=\lambda_k\circ S\in X'$.

\medskip

For $n\in\N$, 
we now define $g=\frac{1}{n}\sum_{i<n} \frac{f_i+g_i}{2} \in F$ 
and the operators 
\begin{equation}\label{eq:opA}
A(\xi) \,:=\, \frac{1}{n}\sum_{i<n} \xi_i p_i \in X, 
\quad \xi=(\xi_i)_{i<n} \in\ell_2^n,
\end{equation}
and
\[
B(y) \,:=\, \frac{1}{\sqrt{n}} \bigl({\lambda_i}(y)\bigr)_{i<n}\in\ell_2^n, \qquad y\in Y, 
\]
and consider the mapping $S_n:=BSA$.
We observe that $\|B\|\le 1$ and, for all $\xi\in[-1,1]^n$, due to convexity, it holds
\[A(\xi) + g \,=\, \frac{1}{n} \sum_{i<n} \left( \frac{1+\xi_i}{2} f_i + \frac{1-\xi_i}{2} g_i \right) \in F.\]
In particular, $A(B_{\ell_2^n})+g \subset F$.
This gives, for any $k< n$, 
\[
 c_k(S_n,B_{\ell_2^n})\le h_k(S,F).
\]
%

\medskip

Our bounds are obtained by considering the determinant of $S_n\co\ell_2^n\to\ell_2^n$.
Since $S_n$  
is generated by the triangular matrix $n^{-3/2} (L_j(p_i))_{i,j<n}$, 
we have 
\[
\det(S_n) \,\ge\, 
\prod_{k<n} \frac{c_k(S,F)}{n^{3/2}(1+\eps)}.
\]
On the other hand, the determinant is multiplicative and equals the product of the singular values, which in turn equal the Gelfand widths $c_k(S_n,B_{\ell_2^n})$. 
Therefore, 
we obtain 
\[
\prod_{k<n} \frac{c_k(S,F)}{n^{3/2}(1+\eps)}
\;\le\;\det(S_n) 
\;=\;  \prod_{k<n} c_k(S_n,B_{\ell_2}) 
\;\le\;  \prod_{k<n} h_k(S,F).
\]
Taking the infimum over all $\eps>0$ and using 
$c_k(S,F) \ge c_{n-1}(S,F)$ for $k<n$,
we obtain 
\[
c_{n-1}(S,F) 
\;\le\; n^{3/2}\,\bigg(\prod_{k<n} h_k(S,F) \bigg)^{1/n}.
\]



\underline{$F$ symmetric}:
If $F$ is additionally symmetric, 
then one has $p_i=\frac{f_i-g_i}{2} \in F$
such that we can redefine $A$ 
by $A(\xi):= \frac{1}{\sqrt{n}}\sum_{i<n} \xi_i p_i$
to have $A(B_{\ell_2}) \subset F$.
Hence, 
we can continue with the triangular matrix 
$S_n=n^{-1}(L_j(p_i))_{i,j<n}$ 
to obtain the improved bound.

\medskip

\underline{$Y$ Hilbert space}:
If $Y$ is a Hilbert space,
we can choose the functionals $L_k$ from the Hahn-Banach theorem explicitly as 
\begin{equation*}\label{eq:infohilbert}
L_k\,:=\, \bigg\langle\,\cdot\ ,\, \frac{Sp_k}{\Vert Sp_k\Vert_Y} \bigg\rangle,
\end{equation*}
where $\ipr{\cdot,\cdot}$ is the inner product in $Y$. 
Hence, by the definition of the sets $M_k$,
we see that the $Sp_k$ for $k\le n$ are pairwise orthogonal.
We can hence skip the factor $n^{-1/2}$ in the definition of $B$ 
and put 
$B(y):=(\lambda_i(y))_{i<n}$ 
while preserving the property $\Vert B \Vert \le 1$. 
Thus, also in this case, we can continue with the triangular matrix 
$S_n=n^{-1}(L_j(p_i))_{i,j<n}$. 

\medskip

\underline{F symmetric, $Y$ Hilbert space}:
We combine the modifications from the previous two cases
and work with the matrix $S_n=n^{-1/2}(L_j(p_i))_{i,j<n}$.




\medskip

\underline{$F$ unit ball of a Hilbert space}: 
The proof is similar to the general case.
We show by induction that for fixed $\eps>0$, 
there are orthogonal vectors
$p_0,p_1,\ldots\in B_X$ and
$L_0,L_1,\ldots\in X'$ 
such that $L_j(p_k)=0$ for $j<k$ and 
$(1+\eps) L_k(p_k)\ge c_{2k}(S,B_X)$ for $k=1,2,\ldots$

Assume that $p_j,L_j$ for $j<k$ are already found, 
and define 
\[
M_n \,:=\, \set{p\in X\co L_j(p)=0 \text{ and } \langle p_k,p \rangle = 0 \text{ for } j< k}. 
\] 
Since $\codim{M_k}\le 2k$, we can choose $p_k\in B_X$ 
with $p_k\in M_k$ and 
\[
(1+\eps) \|Sp_k\| \,\ge\, c_{2k}(S,B_X).  
\]
By the Hahn-Banach theorem, there is 
$\lambda_k\in B_{Y'}$
with $\lambda_k(Sp_k)=\|Sp_k\|$. 

\medskip

For $n\in\N$, we define the operators $A \in \mathcal{L}(\ell_2^n,X)$ and $B\in \mathcal{L}(Y,\ell_2^n)$ by
\[
A(\xi) \,:=\, \sum_{i<n} \xi_i p_i, \qquad 
B(y) \,:=\, \frac{1}{\sqrt{n}}\bigl(\lambda_i(y)\bigr)_{i<n}
\]
such that $\Vert A \Vert \le 1$ and $\|B\|\le 1$.
The mapping $S_n:=BSA$
is generated by the triangular matrix $n^{-1/2}(L_j(p_i))_{i,j<n}$ , 
where $L_j:=\lambda_j\circ S$. 
This gives
\[
\prod_{k<n} \frac{c_{2k}(S,B_X)}{n^{1/2}(1+\eps)}
\;\le\;\det(S_n) 
\;=\;  \prod_{k<n} c_k(S_n,B_{\ell_2^n}) 
\;\le\;  \prod_{k<n} h_k(S,B_X). 
\]
Taking the infimum over $\eps>0$ and using $c_{2k} \le c_{2n-2}$ for $k<n$, we get
\[
c_{2n-2}(S,F) 
\;\le\; n^{1/2}\,\bigg(\prod_{k<n} h_k(S,F) \bigg)^{1/n}.
\]
\end{proof}

\begin{remark}\label{rem:known}
a) Note the ``oversampling'' in Theorem~\ref{thm:widths} in the case that the input space is a Hilbert space, 
where we consider $c_{2n-2}$ on the left hand side. 
We do not know if this is necessary.

b) We mentioned above that the Gelfand and Hilbert numbers 
coincide with the corresponding s-numbers if $F$ is the unit ball of $X$. 
In this case, Theorem~\ref{thm:widths} was known, see~\cite[2.10.7]{Pie87} and~\cite[Theorem 11.12.3]{Pietsch-ideals} or~\cite{U24} for a streamlined presentation.
\end{remark}

\medskip

It may be desirable to compare $c_n$ directly with $h_n$ instead of the geometric mean of $h_0,\ldots,h_n$.
Under the regularity condition that $h_k/h_{2k}$ is bounded,
such a comparison is obtained from the following lemma.

\begin{lemma}\label{lem:regular}
Let $n\in\N$ be even and $z_1 \ge \ldots \ge z_n>0$. 
Moreover, let $c>0$ with $z_k \le c\, z_{2k}$ for all $k\le n/2$.
Then
\[
\bigg(\prod_{k=1}^n z_k \bigg)^{1/n} \;\le\; c^4\,z_n.
\]
\end{lemma}

\medskip 

\begin{proof}[Proof of Lemma~\ref{lem:regular}]
Choose $v\in\N_0$ such that $n/2 < 2^v \le n$.
For $2^j \le k \le n$, we get 
\[z_k \le z_{2^j} \le c^{v-j} z_{2^v}
\le c^{v-j} z_{n/2}
\le c^{v-j+1} z_n\]
so that
\[
\bigg(\prod_{k=1}^n z_k \bigg)^{1/n} \le 
\bigg( \prod_{j=0}^{v} \prod_{2^j \le k < 2^{j +1}} c^{v-j+1}  \bigg)^{1/n} \cdot z_n
= c^\kappa \, z_n
\]
with
\[
 \kappa = \frac1n \sum_{j=0}^{v} (v-j+1) 2^j  
 = \frac{2^{v+1}}{n} \sum_{i=1}^{v+1} i 2^{-i} \le 4.
\]
\end{proof}

 Lemma~\ref{lem:regular} is a fitting estimate for sequences of polynomial decay: 
 If $z_n=n^{-\alpha}$, then $c=2^\alpha$ 
 is independent of $n$. 
 For sequences of super-polynomial decay, it might be better to use the simple estimate
 \begin{equation}\label{eq:exponential}
 \bigg(\prod_{k=1}^n z_k \bigg)^{1/n} \;\le\; \sqrt{\phantom{l} z_1 \cdot z_{n/2}}.
 \end{equation}

\medskip

In the case that $Y$ is a Hilbert space,
there also is the following alternative bound which works for individual $n$
without any regularity condition as in Lemma~\ref{lem:regular}.
On the downside, the upper bound is in terms of the (possibly larger)
Bernstein numbers instead of the Hilbert numbers.

\begin{theorem}\label{thm:hilbert}
Let $X$ be a Banach space, $H$ be a Hilbert 
space and $S\in\calL(X,H)$. 
For every convex $F\subset X$ and $n\in\N_0$, 
we have
\[
c_n(S,F) 
\;\le\; (n+1)\cdot b_n(S,F).
\]  
We can replace $(n+1)$ by $\sqrt{n+1}$ 
if $F$ is additionally symmetric.
\end{theorem}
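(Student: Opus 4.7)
The plan is to prove Theorem~\ref{thm:hilbert} by contraposition: fixing $c < c_n(S,F)$, I will produce a Bernstein ball in $F$ of radius exceeding $c/(n+1)$ in general, and $c/\sqrt{n+1}$ when $F$ is symmetric. Letting $c\to c_n(S,F)$ then gives the two claimed bounds. The strategy splits into two clean parts: an iterative construction of nearly-orthogonal witnesses in $H$, followed by an explicit ball inscription inside $F$.

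For the iterative part, I exploit that $Y = H$ is Hilbert to restrict the infimum defining $c_n(S,F)$ to the smaller class of functionals $L_i = S^*\ell_i$ with $\ell_i \in H$; this restriction can only enlarge the resulting number, and combined with the monotonicity $c_k(S,F) \ge c_n(S,F)$ for $k \le n$ it gives the key fact: for every orthonormal system $\ell_1,\ldots,\ell_k \in H$ with $k \le n$ there exist $f,g \in F$ such that $Sf-Sg$ is orthogonal to every $\ell_i$ and $\|Sf-Sg\|_H > 2c$. Greedily iterating and normalizing each new direction $\ell_{k+1} := (Sf_k - Sg_k)/\|Sf_k - Sg_k\|$ from $k=0$ to $k=n$, I obtain pairs $f_0,g_0,\ldots,f_n,g_n \in F$ whose differences $w_i := Sf_i - Sg_i$ are mutually orthogonal in $H$ with $\|w_i\|_H > 2c$.

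For the inscription, I set $h := \tfrac{1}{2(n+1)}\sum_{i=0}^n (f_i+g_i) \in F$ (a convex combination) and $V := \Span\{f_i-g_i : 0\le i\le n\}$, which has dimension $n+1$ with $S|_V$ injective since the $w_i$ are orthogonal. A direct expansion of $v = \sum_i \alpha_i (f_i-g_i)\in V$ gives
\[
h + v \;=\; \sum_i \Bigl(\tfrac{1}{2(n+1)} + \alpha_i\Bigr) f_i + \sum_i \Bigl(\tfrac{1}{2(n+1)} - \alpha_i\Bigr) g_i,
\]
which is a convex combination of elements of $F$ exactly when $|\alpha_i| \le \tfrac{1}{2(n+1)}$ for every $i$. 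Orthogonality of the $w_i$ gives $\|v\|_S^2 = \sum_i \alpha_i^2 \|w_i\|_H^2$, so this $\ell_\infty$-condition on the $\alpha_i$ is implied by $\|v\|_S \le \min_i \|w_i\|_H/(2(n+1))$, and the right-hand side exceeds $c/(n+1)$. This produces a $\|\cdot\|_S$-ball of radius $> c/(n+1)$ in $V$ centered at $h$ and contained in $F$, proving the first bound.

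For the symmetric refinement, the same iteration applied to the simpler symmetric form $c_k(S,F) = \inf_L \sup_{h\in F,\, L_i(h)=0} \|Sh\|_H$ yields single witnesses $f_0,\ldots,f_n \in F$ with $Sf_i$ mutually orthogonal and $\|Sf_i\|_H > c$. Taking the origin $0\in F$ as center and $V=\Span\{f_i\}$, any $v=\sum_i \alpha_i f_i$ with $\sum_i |\alpha_i| \le 1$ lies in $F$ by convexity and symmetry, and Cauchy--Schwarz converts this $\ell_1$-condition into the Euclidean condition $\|v\|_S \le \bigl(\sum_i \|Sf_i\|_H^{-2}\bigr)^{-1/2} > c/\sqrt{n+1}$, giving the sharper factor. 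The main subtlety I anticipate is keeping the first step honest: verifying that passing from arbitrary functionals to Hilbert-dual ones, and the monotonicity in $k$, fit together so that the greedy iteration can always proceed. Once that is in place, the ball inscription is elementary geometry, and the factors $n+1$ and $\sqrt{n+1}$ emerge naturally as the inscription ratios for simplices, respectively cross-polytopes, inside Euclidean balls.
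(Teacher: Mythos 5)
Your proof is correct and takes essentially the same approach as the paper: the paper's proof of Theorem~\ref{thm:hilbert} reuses the greedy construction from the proof of Theorem~\ref{thm:widths}, which (in the Hilbert-target case) produces exactly the pairs $f_k,g_k\in F$ with $Sp_k=\tfrac12 S(f_k-g_k)$ mutually orthogonal and $\|Sp_k\|$ bounded below by the Gelfand number, and then inscribes a $\|\cdot\|_S$-ball via convex combinations centered at the average of the $f_k,g_k$ (respectively at $0$ in the symmetric case). The minor cosmetic differences — you carry an extra pair and index at $n$ rather than $n-1$, you parametrize the inscribed region as a cube in coefficient space rather than the paper's ellipsoid $A(B_{\ell_2^n})+g$, and you use a fixed $c<c_n$ rather than a $(1+\eps)$ slack — all lead to the same bounds.
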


\medskip

\begin{proof}[Proof of Theorem~\ref{thm:hilbert}]
 We take $g_k$, $f_k$ and $p_k=\frac{f_k-g_k}{2}$ from the proof of Theorem~\ref{thm:widths} (general case),
 and put $r= \frac{c_{n-1}}{1+\eps}$. 
 The $n$-dimensional space $V$ spanned by $p_0,\ldots,p_{n-1}$ with
 the norm $\Vert\cdot \Vert_S$ is a Hilbert space.
 The vectors $p_k$ have norm at least $r$ and,
 as observed in 
 the proof of Theorem~\ref{thm:widths},
 they are orthogonal in $V$.
 If $F$ is convex and symmetric,
 we have $\pm p_k \in F$
 and so $F$ contains  
 a ball of radius $\frac{r}{\sqrt{n}}$ in $V$. 
 This proves the claim, i.e., $b_{n-1}(S,F) \ge \frac{c_{n-1}(S,F)}{\sqrt{n}}$.
 \medskip

 In the non-symmetric case,
 we already observed that $A(B_{\ell_2^n})+g \subset F$
 with $A$ and $g$ as in \eqref{eq:opA},
 so that $F$ contains a ball of radius $\frac{r}{n}$.\\
\end{proof}

The paper \cite{Pu79} contains bounds similar to Theorem~\ref{thm:hilbert} with the Gelfand widths replaced by the Kolmogorov widths. Since the target space is a Hilbert space,
the Kolmogorov widths are larger than
the Gelfand widths, 
see, e.g., \cite[Prop. 5.2]{Pinkus85} and 
Section~\ref{sec:OPs}.
This means that the bounds of Theorem~\ref{thm:hilbert} are 
known up to constants.
We presented the proof anyway since the 
result follows with little effort from 
our other observations.

\medskip

Due to their relations with the previously defined minimal worst-case errors (as discussed in the next section),
the Gelfand, Bernstein and Hilbert numbers as considered above are of particular interest to us.
Nonetheless, in Section~\ref{sec:OPs}, we will mention some 
other types of widths that may be of independent interest
and discuss how 
Theorem~\ref{thm:widths} applies to these widths.

\medskip

\section{Widths versus minimal errors}
\label{sec:relation}

In this section, we discuss how the Gelfand, Bernstein and Hilbert numbers are related to minimal errors
and hence obtain bounds between the different types of minimal errors.

\medskip


First, note that the Gelfand numbers characterize
the minimal worst-case error of deterministic algorithms up to a factor of two.
This is a special case of a classical result in information-based complexity,
see, e.g, \cite[Sec.~4.1]{NW1}.
In our setting, the result reads as follows.

\medskip

\begin{prop}\label{prop:upper-c}
    Let $X$ and $Y$ be Banach spaces and $S\in\calL(X,Y)$. 
    For every 
    $F\subset X$ and $n\in\N_0$, we have
    \[
     c_n(S,F) \,\le\, e_n^{\detnon}(S,F) \,\le\, 2\, c_n(S,F).
    \]
    If $F$ is convex and symmetric then 
    \[
     c_n(S,F) \,\le\, e_n^{\det} 
     (S,F) \, \le \, e_n^{\detnon}(S,F) \,\le\, 2\, c_n(S,F).
    \]
\end{prop}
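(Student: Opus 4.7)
The plan is to establish three nontrivial inequalities separately: the lower bound $c_n(S,F)\le e_n^{\detnon}(S,F)$ via the indistinguishability of inputs carrying the same information, the upper bound $e_n^{\detnon}(S,F)\le 2c_n(S,F)$ via an interpolation-style reconstruction, and in the symmetric case the lower bound $c_n(S,F)\le e_n^{\det}(S,F)$ via a ``zero-path'' symmetry argument. The inclusion $e_n^{\det}(S,F)\le e_n^{\detnon}(S,F)$ is immediate from the definitions. The main obstacle is the last bound: adaption must be ruled out, and this is exactly where the symmetry of $F$ must be used.

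For the lower bound, fix any non-adaptive algorithm $A_n=\phi_n\circ(L_1,\dots,L_n)$. Whenever $f,g\in F$ satisfy $L_k(f)=L_k(g)$ for all $k$, the algorithm produces the same output, so $A_n(f)=A_n(g)$ and the triangle inequality yields $\tfrac12\|S(f)-S(g)\|\le e(A_n,S,F)$. Taking the supremum over such pairs and then the infimum over all choices of the $L_k$ (i.e., over all non-adaptive algorithms) gives $c_n(S,F)\le e_n^{\detnon}(S,F)$.

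For the upper bound, fix $\varepsilon>0$ and choose functionals $L_1,\dots,L_n$ for which the supremum in the definition of $c_n(S,F)$ is at most $c_n(S,F)+\varepsilon$. For each measurement vector $y\in\R^n$ set $F_y:=\{f\in F: L_k(f)=y_k\ \forall k\}$, and define $\phi_n(y):=S(f^*)$ for some arbitrarily chosen $f^*\in F_y$ (and $\phi_n(y):=0$ if $F_y=\emptyset$). Since any two points of $S(F_y)$ lie within $2(c_n(S,F)+\varepsilon)$ of each other, every $f\in F$ satisfies $\|S(f)-\phi_n(N_n(f))\|\le 2(c_n(S,F)+\varepsilon)$. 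Letting $\varepsilon\to 0$ yields $e_n^{\detnon}(S,F)\le 2c_n(S,F)$.

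For the symmetric case, let $A_n$ be any adaptive deterministic algorithm. Since $F$ is non-empty, convex and symmetric, $0\in F$. Let $L_1^*,\dots,L_n^*$ be the specific functionals the adaptive scheme selects when run on the input $f=0$. For any $f\in F$ with $L_k^*(f)=0$ for all $k$, the algorithm's adaptive choices on $f$ coincide step-by-step with those on $0$, whence $A_n(f)=A_n(0)$; by symmetry also $-f\in F$ and the same reasoning gives $A_n(-f)=A_n(0)$. Two triangle inequalities then yield
\[
2\,e(A_n,S,F)\;\ge\;\|S(f)-A_n(0)\|+\|-S(f)-A_n(0)\|\;\ge\;2\|S(f)\|.
\]
The substitution $h=(f-g)/2\in F$ (valid by convexity and symmetry) shows that for symmetric $F$ the Gelfand numbers admit the equivalent formulation $c_n(S,F)=\inf_{L_1,\dots,L_n}\sup\{\|S(h)\|:h\in F,\ L_k(h)=0\ \forall k\}$. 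Taking the supremum over $f\in F\cap\bigcap_k\ker L_k^*$ and then the infimum over algorithms gives $c_n(S,F)\le e_n^{\det}(S,F)$, completing the chain.
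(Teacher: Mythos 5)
Your proof is correct and follows the standard approach that the paper alludes to (the paper itself only gives a brief sketch and refers to the literature, concentrating instead on Proposition~\ref{prop:upper-ada}, but the technique is the same). All three steps—the indistinguishability lower bound, the Chebyshev-centre-free interpolation upper bound with factor~$2$, and the ``run the adaptive algorithm on the centre and perturb symmetrically'' argument in the symmetric case—are precisely the classical ones; the reformulation $c_n(S,F)=\inf_{L_1,\dots,L_n}\sup\{\|S(h)\|:h\in F,\ L_k(h)=0\}$ for convex symmetric $F$ is verified correctly via the substitution $h=(f-g)/2$, and the observation that identical measurement strings force an adaptive algorithm into the same branch is exactly what is needed.
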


\medskip

We turn to the relation of Bernstein 
numbers and minimal errors. 
It is known, 
see~\cite{Novak95b}, that $b_n(S,F)$ 
may serve as a lower bound for 
the error of adaptive deterministic algorithms.

\medskip 

\begin{prop}\label{prop:upper-ada}
    Let $X$ and $Y$ be Banach spaces and 
    $S\in\calL(X,Y)$. 
    For every 
    $F\subset X$ and $n\in\N_0$, we have
    \[
   e_n^{\det}(S,F) \,  \ge \,    b_n(S,F) .
    \]
\end{prop}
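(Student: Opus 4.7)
The plan is the classical ``fooling pair'' argument. Fix any adaptive deterministic algorithm $A_n = \phi_n \circ N_n \in \Adet$ and any configuration witnessing $b_n(S,F) \ge r$: an $(n+1)$-dimensional subspace $V \subset X$ on which $S$ is injective, a point $g \in F$, and a ball $B \subset V$ of radius $r$ in $(V, \|\cdot\|_S)$ with $g + B \subset F$. As a preliminary normalisation I would shift the centre of $B$ into $g$: writing $B = c + B_V(r)$ with $B_V(r) := \{v \in V : \|v\|_S \le r\}$ and noting that $0 \in B_V(r)$, the point $g' := g + c$ lies in $F$ and satisfies $g' + B_V(r) = g + B \subset F$. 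So I may assume from the outset that $g + B_V(r) \subset F$, and in particular $g \pm v \in F$ for every $v \in V$ with $\|v\|_S \le r$.

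Next I would run $A_n$ on the input $g$ and record the adaptively chosen functionals $L_1, L_2^g, \dots, L_n^g \in X'$. The central linear-algebra step is that the map $T : V \to \R^n$ defined by $T(v) = (L_1(v), L_2^g(v), \dots, L_n^g(v))$ has nontrivial kernel, since $\dim V = n+1 > n$; I pick $h$ in this kernel with $\|h\|_S = r$, using that $\|\cdot\|_S$ is a genuine norm on $V$ by injectivity of $S$. The conceptual heart of the argument is then the inductive claim $A_n(g) = A_n(g+h) = A_n(g-h)$. On input $g \pm h$ the first functional is the fixed $L_1$ and $L_1(g \pm h) = L_1(g) \pm 0 = L_1(g)$, so the second functional selected is again $L_2^g$; iterating, $L_k^g(g \pm h) = L_k^g(g)$ and the $k$-th chosen functional agrees with $L_k^g$ for every $k \le n$. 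Hence $N_n(g+h) = N_n(g) = N_n(g-h)$, and applying $\phi_n$ gives the claim. This indistinguishability step is the only place where the adaptive structure of $N_n$ must be handled with care, and I expect it to be the main (though minor) conceptual obstacle.

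To conclude, the triangle inequality yields
\[
2\,e(A_n,S,F) \;\ge\; \|S(g+h) - A_n(g+h)\|_Y + \|A_n(g-h) - S(g-h)\|_Y \;\ge\; \|S(2h)\|_Y \;=\; 2r,
\]
so $e(A_n,S,F) \ge r$. Taking the infimum over $A_n \in \Adet$ and the supremum over admissible configurations $(V, g, r)$ gives $e_n^{\det}(S,F) \ge b_n(S,F)$. I do not foresee any further obstacle; note in passing that convexity of $F$ is not actually needed, only the existence of the inscribed ball.
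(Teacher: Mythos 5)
Your proposal is correct and matches the paper's proof essentially line for line: fix an adaptive algorithm, freeze the functionals it would choose on the midpoint $g$ of the inscribed ball, use rank–nullity to find a nonzero $h \in V$ annihilated by those functionals, observe by the (inductive) adaptivity argument that $A_n$ cannot distinguish $g+h$ from $g-h$, and conclude via the triangle inequality that $e(A_n,S,F)\ge r$. The paper's write-up is more compressed and leaves the normalization of the ball's center and the induction over the adaptively chosen functionals implicit, but these are exactly the steps you spell out; no discrepancy in substance.
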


\medskip

\begin{proof}[Proof of Proposition~\ref{prop:upper-c} 
and \ref{prop:upper-ada}]
The technique of the proof is the same 
for both results and 
well known 
(but a factor of 2 is missing 
in Proposition 1 of \cite{Novak95b})
and hence we concentrate on 
Proposition~\ref{prop:upper-ada}. 
Let $A_n = \phi_n \circ N_n$
be an algorithm based on the information 
$N_n : F \to \R^n$ that might be adaptive. 
We fix the non-adaptive and linear mapping
$N_n^* = (L_1^*, \dots , L_n^*) : 
F \to \R^n$ that is taken for the midpoint 
$g$ of a ball $g + B \subset F$. 
The mapping $N_n^*$ cannot be injective 
and there exists a point $\tilde f$ on the sphere 
of $B$
with 
$N^*(\tilde f+ g) = N^* (g) = N^*(g-\tilde f)$,
hence
$N(g+\tilde f) = N(g-\tilde f)$.
Then $A_n$ cannot distinguish between the two 
inputs and we obtain the lower bound. \\ 
\end{proof}
 
In the case that $X$ and $Y$ are Hilbert spaces 
and $F$ is the unit ball of $X$,
it is shown in \cite{Novak92} that the 
Bernstein numbers also yield lower bounds for 
randomized adaptive algorithms.
Here we use a slightly different error 
criterion and hence formulate a lemma. 

\begin{lemma}  \label{lemma92} 
Let $H$ and $G$ be Hilbert spaces 
and $S\in\calL(H,G)$. 
For every $n\in\N_0$, we have 
\begin{equation}  \label{eq:Novak92}
e_n^{\ranada}(S,B_H) \,\ge\, \frac{1}{2}\, 
b_{2n-1}(S,B_H)
\,=\, \frac{1}{2}\, e_{2n-1}^{\detnon}(S,F).
\end{equation}
\end{lemma}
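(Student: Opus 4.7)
The plan is to reduce, via the definition of $b_{2n-1}$, to showing that the $n$-th randomized minimal error for the identity on the unit ball $B_{H_0}$ of a $2n$-dimensional Hilbert space $H_0$ is at least $1/2$; this is then obtained from Bakhvalov's averaging principle.

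First, fix $\epsilon>0$ and, by the definition of $b_{2n-1}(S,B_H)$, pick a $2n$-dimensional subspace $V\subset H$ on which $S$ is injective together with a number $r>b_{2n-1}(S,B_H)-\epsilon$ such that $\{v\in V\co\|Sv\|_G\le r\}\subset B_H$. Equipping $V$ with the inner product $\langle u,v\rangle_S:=\langle Su,Sv\rangle_G$ turns it into a $2n$-dimensional Hilbert space $H_0$ on which $S$ acts as an isometry onto its image, and the Bernstein ball above becomes precisely $rB_{H_0}$. Any randomized adaptive algorithm for $S$ on $B_H$ restricts via this inclusion to one for $\mathrm{id}_{H_0}$ on $rB_{H_0}$, so
\[
e_n^{\ranada}(S,B_H)\ \ge\ r\cdot e_n^{\ranada}(\mathrm{id}_{H_0},B_{H_0}).
\]
It therefore suffices to show $e_n^{\ranada}(\mathrm{id}_{H_0},B_{H_0})\ge 1/2$, after which the lemma follows by letting $\epsilon\to 0$.

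For this, I would apply Bakhvalov's averaging principle --- for every probability measure $\mu$ on $B_{H_0}$,
\[
e_n^{\ranada}(\mathrm{id}_{H_0},B_{H_0})\ \ge\ \inf_{A\in\Adet}\int\|f-A(f)\|\,d\mu(f)
\]
--- and take $\mu$ to be the uniform measure on the unit sphere of $H_0$. For any deterministic adaptive $A$ using $n$ linear measurements, conditionally on the observed trajectory $N_A(f)=y$ the input $f$ is uniformly distributed on a spherical fibre of some radius $\rho(y)$ centred at $c(y):=\E_\mu[f\mid y]$, so the expected distance from any point to a uniform point on this sphere is at least $\rho(y)$, giving $\E_\mu[\|f-A(f)\|\mid y]\ge\rho(y)$. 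Integrating over $y$ and using that $\rho\in[0,1]$,
\[
\int\|f-A(f)\|\,d\mu\ \ge\ \E\rho\ \ge\ \E\rho^2\ =\ 1-\E\|c(y)\|^2.
\]
It thus remains to show $\E\|c(y)\|^2\le 1/2$ uniformly in~$A$.

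The main obstacle is this final inequality, which says that $n$ pieces of (adaptive) linear information can explain at most half of the total variance of the isotropic $2n$-dimensional distribution $\mu$. The analogous statement for Gaussian $\mu$ is classical and underlies the original argument in~\cite{Novak92}; the version for the uniform sphere goes through by induction on the depth of the decision tree of $A$, since at each node the conditional distribution is again uniform on a sphere in some affine subspace with isotropic covariance on the tangent, and each successive linear query removes at most $1/(2n)$ from the trace of the covariance, yielding a total of at most $n/(2n)=1/2$ after $n$ queries. The only difference from the $L^2$-based formulation in~\cite{Novak92} is the monotone passage $\E\rho\ge\E\rho^2$ afforded by $\rho\le 1$, which is what allows us to transfer the second-moment bound into the first-moment bound required by our error criterion.
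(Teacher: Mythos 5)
Your proposal follows the same overall route as the paper's proof: reduce via the Bernstein width to the identity on a $2n$-dimensional Euclidean space, apply Bakhvalov's averaging principle with the uniform spherical measure, observe that the conditional fibre given the information is a sphere of some radius $\rho(y)$, pass from first to second moments using $\rho\le 1$, and reduce to showing $\E\|c(y)\|^2\le 1/2$. The reduction, the averaging, and the moment passage are all fine.

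The gap is in the claim that ``each successive linear query removes at most $1/(2n)$ from the trace of the covariance,'' which is the heart of the matter and is not actually established by what you write. Pathwise, the variance removed by query $k$ is $y_k^2$, which can be close to $1$; and even in conditional expectation, given $y_1,\dots,y_{k-1}$, the removal equals $\rho_{k-1}^2/(2n-k+1)$, which exceeds $1/(2n)$ whenever $\rho_{k-1}^2>(2n-k+1)/(2n)$ --- a positive-probability event for $k\ge 2$. What is true is that the \emph{unconditional} expectation $\E\, y_k^2$ equals $1/(2n)$ for every $k$, but this needs an argument. One can run the recursion $\E\rho_k^2=\frac{2n-k}{2n-k+1}\,\E\rho_{k-1}^2$ (which telescopes to $\E\rho_n^2=1/2$), or, as the paper does more cleanly, observe that after orthonormalizing the functionals the distribution $\nu$ of the information vector $(y_1,\dots,y_n)$ is, by rotational symmetry of the sphere, the \emph{same} for every adaptive strategy; one can therefore simply compute with the fixed non-adaptive choice $N_n(f)=(f_1,\dots,f_n)$, giving $\E\rho^2=\E\sum_{i=n+1}^{2n}f_i^2=n\cdot\frac{1}{2n}=\frac12$. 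Without this invariance (or the recursion), your induction does not close as stated.
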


\medskip

With a different constant, Lemma~\ref{lemma92} is implied by \cite[Cor.\,2]{He92}.


\begin{proof}[Proof of Lemma~\ref{lemma92}]

Let $0<b<b_{2n-1}(S,F)$. There exists a subspace $V \subset H$ with dimension $2n$ such that $\Vert Sf \Vert_G \ge b \Vert f \Vert_H$ for all $f\in V$. Let $W:=S(V)$.
We choose $R\in\mathcal{L}(\R^{2n},V)$ and $Q\in\mathcal{L}(W,\R^{2n})$, where $\R^{2n}$ is considered with the Euclidean norm, such that $\Vert R \Vert \le 1$ and $\Vert Q \Vert \le b^{-1}$ and 
such that $QSR$ equals the identity ${\rm id}_{2n}$ on $\R^{2n}$.
If $A_n$ is a randomized algorithm for $S$ with error less than $b/2$,
then $QA_nR$ is a randomized algorithm for ${\rm id}_{2n}$ with error less than $1/2$.
It hence suffices to prove the lower bound $1/2$ in the case $S={\rm id}_{2n}$.


We let $P_{2n}$ be the uniform 
distribution on the sphere 
of $\R^{2n}$.
An application of Fubini's theorem
(known as Bakhvalov's proof technique, see~\cite{Ba15} or \cite[Section~4.3.3]{NW1})
gives
\[
 e_n^\ranada(S,F) \,\ge\, \inf_{A_n}\, 
 \int \Vert f - A_n(f) \Vert \, {\rm d} P_{2n}(f),
\]
where the infimum runs over all deterministic and measurable algorithms $A_n\in \mathcal{A}_n^\detada$.
Hence let $A_n = \phi \circ N_n$ be a measurable deterministic algorithm with adaptively chosen information 
$N_n=(L_1, \dots , L_n)$ and let $f$ be distributed according to $P_{2n}$. 
Assume that the functionals $L_i$ are chosen orthonormal; 
this is no restriction. 
For each $y$ in the unit ball of $\R^n$, the information $N_n(f)=y$ defines a sphere $\mathbb{S}_y$ of radius $r_y=\sqrt{1-\Vert y\Vert^2}$.
We have
\[
 \int \Vert f - A_n(f) \Vert \, {\rm d} P_{2n}(f) 
\,=\, \int \int \Vert f - \phi(y) \Vert \, {\rm d} \mu_y(f) \, {\rm d} \nu(y)  
\]
where $\mu_y$ is the uniform distribution on $\mathbb{S}_y$ and $\nu$ is the distribution of $N_n(f)$.
The inner integral is minimized if $\phi(y)$ equals the center of $\mathbb{S}_y$, so that we have
\[
 \int \Vert f - A_n(f) \Vert \, {\rm d} P_{2n}(f) 
\,\ge\, \int r_y \, {\rm d} \nu(y)  
\,\ge\, \int r_y^2 \, {\rm d} \nu(y).  
\]
From the symmetry of $P_{2n}$ it follows that 
$\nu$ does not depend on $N_n$. 
We choose $N_n(f)=(f_1,\hdots,f_n)$ and get
\[
 \int \Vert f - A_n(f) \Vert \, {\rm d} P_{2n}(f) 
\,\ge\, \int \sum_{i=n+1}^{2n} f_i^2 \, {\rm d} P_{2n}(f)
\,=\, \frac{1}{2}.
\]
The last identity holds since the $f_i^2$ are identically distributed so that their expected value equals $1/(2n)$. 
See Proposition~\ref{prop1} for the equality $b_n=e_n^{\detnon}$.\\
\end{proof}

More recently, it has been shown in \cite{Ku16,Ku17} 
that also for Banach spaces $X$ and $Y$, 
it holds that
\begin{equation}\label{eq:Kunsch}
e_n^{\ranada}(S,F) \,\ge\, \frac{1}{30}\, 
b_{2n-1}(S,F).
\end{equation} 
The result of \cite{Ku17} is proven only in the symmetric case but it remains valid if $F$ is only convex.
On the other hand, the result \eqref{eq:Novak92} for the Hilbert case easily implies the following.

\begin{prop}\label{prop:lower-h}
    Let $X$ and $Y$ be Banach spaces and $S\in\calL(X,Y)$. For every convex $F\subset X$ and $n\in\N$, we have
    \[
     e_n^{\ranada}(S,F) \,\ge\, \frac{1}{2}\, h_{2n-1}(S,F).
    \]
\end{prop}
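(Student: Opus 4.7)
The plan is to reduce approximation of $BSA$ on the unit ball $B_{\ell_2}$ of a Hilbert space to approximation of $S$ on $F$, and then invoke the Hilbert-case Lemma~\ref{lemma92}. Fix any admissible triple $(B,A,x)$ from the definition of $h_{2n-1}(S,F)$, i.e., $B\in\calL(Y,\ell_2)$ with $\Vert B\Vert\le 1$, $A\in\calL(\ell_2,X)$, and $x\in F$ with $A(B_{\ell_2})+x\subset F$. Given any randomized adaptive algorithm $A_n\in\Aran(F,Y)$ for $S$ on $F$, define a randomized algorithm $\widetilde A_n\co B_{\ell_2}\to \ell_2$ by
\[
\widetilde A_n(h) \,:=\, B\,A_n(Ah+x) \,-\, BSx.
\]

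First I would verify that $\widetilde A_n$ is a legitimate randomized adaptive algorithm in $\Aran(B_{\ell_2},\ell_2)$ using at most $n$ pieces of linear information on $\ell_2$. Each functional $L_i$ applied by $A_n$ to the input $Ah+x$ yields $L_i(Ah)+L_i(x)=(L_i\circ A)(h)+L_i(x)$. Since $L_i\circ A\in(\ell_2)'$ and $L_i(x)$ is a constant known to the algorithm once $L_i$ is chosen, this measurement is informationally equivalent to the linear functional $L_i\circ A$ on $\ell_2$; adaptivity and measurability are preserved because $h\mapsto Ah+x$ is affine and continuous.

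Next I would bound the error. For every $h\in B_{\ell_2}$, set $f=Ah+x\in F$ and compute
\[
\E\,\bigl\Vert BSAh - \widetilde A_n(h)\bigr\Vert_{\ell_2}
\,=\, \E\,\bigl\Vert B\bigl(S(f)-A_n(f)\bigr)\bigr\Vert_{\ell_2}
\,\le\, \Vert B\Vert\cdot \E\,\bigl\Vert S(f)-A_n(f)\bigr\Vert_Y
\,\le\, e(A_n,S,F).
\]
Taking the infimum over $A_n\in\Aran(F,Y)$ on the right and the infimum over all randomized adaptive algorithms for $BSA$ on $B_{\ell_2}$ (a subset thereof arising via the construction) on the left yields
\[
e_n^{\ranada}(BSA,B_{\ell_2}) \,\le\, e_n^{\ranada}(S,F).
\]

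Finally I would invoke Lemma~\ref{lemma92} for the Hilbert-to-Hilbert operator $BSA\co \ell_2\to\ell_2$ to get $e_n^{\ranada}(BSA,B_{\ell_2})\ge \frac12\,b_{2n-1}(BSA,B_{\ell_2})$. Since Bernstein and Gelfand numbers coincide on operators between Hilbert spaces with input the unit ball (both equal the singular values, as noted after the definition of $h_n$), this lower bound equals $\frac12\,c_{2n-1}(BSA,B_{\ell_2})$. Combining the two inequalities and taking the supremum over all admissible triples $(B,A,x)$ produces
\[
e_n^{\ranada}(S,F) \,\ge\, \tfrac12\, h_{2n-1}(S,F),
\]
as claimed. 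The main conceptual point is the reduction itself; the only step that requires any care is checking that the affine shift by $x$ does not damage the adaptive/randomized structure or the measurability conditions imposed on $\Aran$, but since everything in the reduction is linear or affine with fixed (non-random) parameters this is routine.
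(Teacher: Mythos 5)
Your proof is correct and follows essentially the same route as the paper: fix an admissible triple $(B,A,x)$, transport $A_n$ to a randomized adaptive algorithm for $BSA$ on $B_{\ell_2}$ (using $\|B\|\le1$ and $A(B_{\ell_2})+x\subset F$), invoke Lemma~\ref{lemma92}, and take the supremum. Your explicit subtraction of $BSx$ in $\widetilde A_n$ is in fact a small notational clean-up of the paper's more abbreviated construction.
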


\medskip

\begin{proof}[Proof of Proposition~\ref{prop:lower-h}]
    Let $A_n \in \mathcal{A}_n^\ranada(X,Y)$
    and let $B\in\calL(Y,\ell_2)$ with $\Vert B \Vert \le 1$ as well as $A\in\calL(\ell_2,X)$ and $g\in F$ with $A(B_{\ell_2}) + g \subset F$.
    Then we have $BA_nA \in \mathcal{A}_n^\ranada(\ell_2,\ell_2)$. This algorithm uses information of the form $L_k'=L_k A\in \ell_2'$, if $L_k\in X'$ is the information used by $A_n$.
    Note that $A$ is continuous in the norm induced by $F$ due to $A(B_{\ell_2}) \subset F-F$ and hence the algorithm is measurable.
    By \eqref{eq:Novak92}, 
    we have
    \[
    e_n^\ranada(BA_nA,BSA,B_{\ell_2})
    \,\ge\, \frac12\, b_{2n-1}(BSA,B_{\ell_2}).
    \]
    On the other hand,
    \begin{multline*}
    e_n^\ranada(BA_nA,BSA,B_{\ell_2})
    \,=\, e_n^\ranada(BA_n(A+g),BS(A+g),B_{\ell_2})\\
    \,\le\, e_n^\ranada(BA_n,BS,F)
    \,\le\, e_n^\ranada(A_n,S,F),
    \end{multline*}
    where we used $(A+g)(B_{\ell_2}) \subset F$ in the first and $\Vert B \Vert \le 1$ in the second inequality.
    So,
    \[
    e_n^\ranada(A_n,S,F)
    \,\ge\, \frac12\, b_{2n-1}(BSA,B_{\ell_2}).
    \]
    Taking the supremum over all $B$, $A$ and $g$ as above gives the result.\\
\end{proof}

We point out that the Hilbert numbers can be much smaller than the Bernstein numbers.
For example, if $S$ is the identity on $\ell_1$ and $F$ the unit ball of $\ell_1$, 
then the Bernstein numbers are equal to one, see \cite{Pietsch-s}, 
while the Hilbert numbers are of order $n^{-1/2}$, see \cite[2.9.19]{Pie87}.
So in general, one should prefer the bound \eqref{eq:Kunsch} over Proposition~\ref{prop:lower-h}.
However, since our upper bounds are in terms of the Hilbert numbers anyway,
we will obtain a better constant in the overall comparison if we work with Proposition~\ref{prop:lower-h} instead of \eqref{eq:Kunsch}.

\medskip

\section{The main result}

We now arrive at our main result, Theorem~\ref{thm:intro},
which we present here in a slightly stronger form.
\begin{theorem}\label{thm:intro-strong}
Let $X$ and $Y$ be Banach spaces and $S\in\calL(X,Y)$. For every convex $F\subset X$ and $n\in\N$, we have
\[
 \left(\prod_{k<2n} e_k^{\detnon}(S,F)\right)^{1/(2n)}
    \,\le\, 2^{7/2}\, n^{3/2} \, \left(\prod_{k<n} e_k^{\ranada}(S,F)\right)^{1/n}.
\]
In special cases, the following 
improvements hold:
\begin{enumerate}[a)]
    \item if $F$ is symmetric, we can replace $n^{3/2}$ with $n$,
    \item if $Y$ is a Hilbert space, we can replace $n^{3/2}$ with $n$,
    \item if $F$ is symmetric and $Y$ a Hilbert space, replace $n^{3/2}$ with $n^{1/2}$,
    \item if $X$ is a Hilbert space and $F$ its unit ball, we can replace $n^{3/2}$ with $n^{1/2}$ if we also replace the range $k<2n$ with $k<4n$.
\end{enumerate}
\end{theorem}

\begin{proof}[Proof of Theorems~\ref{thm:intro} and~\ref{thm:intro-strong}]
    A successive application of Proposition~\ref{prop:upper-c},
    Theorem~\ref{thm:widths},
    the monotonicity of the Hilbert widths, and Proposition~\ref{prop:lower-h} (in the weaker form $h_{2k} \le 2 e_k^\ranada$ for all $k\in\N_0$) gives
    \begin{align*}
        \prod_{k<2n} &e_k^{\detnon}(S,F)
        \,\le\, 2^{2n} \cdot \prod_{k<2n} c_k(S,F)
        \,\le\, 2^{5n} n^{3n} \cdot \prod_{k<2n} h_k(S,F)\\
        &\le\, 2^{5n} n^{3n} \cdot \prod_{k<n} h_{2k}(S,F)^2
        \,\le\, 2^{7n} n^{3n} \cdot \prod_{k<n} e_k^{\ranada}(S,F)^2.
    \end{align*}
    The modifications in the special cases are obvious.\\
\end{proof}



Since 
estimates in terms of geometric means might be unfamiliar to the reader,
we 
present 
a corollary of Theorem~\ref{thm:intro}
which is reminiscent of Carl's inequality.

\begin{coro}\label{cor:Carl}
Let $X$ and $Y$ be Banach spaces and $S\in\mathcal{L}(X,Y)$.
For every convex 
    $F\subset X$, $n\in\N$ and $\alpha>0$, we have
\vspace*{5pt}
\[
e_{2n-1}^\detnon(S,F) 
\;\le\; C_\alpha \, n^{-\alpha+3/2}\cdot \sup_{k< n}\left( (k+1)^{\alpha}\, e_k^\ranada(S,F)\right),
\]
where $C_\alpha \le 12^{\alpha+1}$.
In accordance with the special cases given in Theorem~\ref{thm:intro}, 
the exponent $3/2$ can be replaced with $1$ or $1/2$.
\end{coro}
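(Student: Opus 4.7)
The plan is to derive the Carl-type inequality directly from Theorem~\ref{thm:intro} by bounding the geometric mean of the $e_k^\ranada(S,F)$ via their weighted supremum, which is a standard AM--GM style manipulation once one has the geometric-mean bound in hand.

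Concretely, I would set
\[
M \,:=\, \sup_{k<n}(k+1)^\alpha\, e_k^\ranada(S,F),
\]
so that $e_k^\ranada(S,F) \le M\,(k+1)^{-\alpha}$ for every $k\in\{0,\dots,n-1\}$. Taking the product over $k<n$ gives
\[
\prod_{k<n} e_k^\ranada(S,F) \,\le\, \frac{M^n}{(n!)^{\alpha}}.
\]
Then I would extract the $n$-th root and apply the elementary Stirling bound $(n!)^{1/n}\ge n/e$ to get
\[
\bigg(\prod_{k<n} e_k^\ranada(S,F)\bigg)^{\!1/n} \,\le\, M\,(n!)^{-\alpha/n} \,\le\, M\,(e/n)^\alpha.
\]

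Plugging this into Theorem~\ref{thm:intro} immediately yields
\[
e_{2n-1}^\detnon(S,F) \,\le\, 12\, n^{3/2}\cdot M\,(e/n)^\alpha \,=\, 12\, e^\alpha\cdot n^{3/2-\alpha}\,M,
\]
and since $e<12$, we have $12\,e^\alpha \le 12^{\alpha+1}$, so the constant $C_\alpha := 12^{\alpha+1}$ works. For the claimed special cases, the same argument with the improved factor from Theorem~\ref{thm:intro}~a)--d) replaces $n^{3/2}$ by $n$ or $n^{1/2}$ without affecting the constant (since the prefactor $12$ is inherited from Theorem~\ref{thm:intro}).

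There is no genuine obstacle in this proof; the only small point of care is making sure the constant is stated cleanly. The Stirling estimate introduces a factor $e^\alpha$ which we absorb into the base $12$ to obtain the uniform bound $C_\alpha \le 12^{\alpha+1}$, rather than something like $12 \cdot e^\alpha$ with two independent constants.
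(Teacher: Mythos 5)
Your proof is correct. The argument you give---setting $M:=\sup_{k<n}(k+1)^\alpha e_k^\ranada(S,F)$, bounding each $e_k^\ranada$ by $M(k+1)^{-\alpha}$, taking the product, and controlling the resulting factor $(n!)^{-\alpha/n}$ via the elementary Stirling bound $(n!)^{1/n}\ge n/e$---is a clean and valid route from Theorem~\ref{thm:intro} to the stated inequality, and the absorption $12\,e^\alpha\le 12^{\alpha+1}$ (using $e<12$) gives exactly the claimed constant.

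The paper takes a mildly different path: it cites Lemma~\ref{lem:regular}, the regularity lemma, applied to the polynomially decaying sequence $z_k=k^{-\alpha}$, for which the lemma's constant $C=\bigl(\max_k z_k/z_{2k}\bigr)^4$ is independent of $n$. The underlying idea is the same as yours---reduce the geometric mean $\bigl(\prod_{k<n}e_k^\ranada\bigr)^{1/n}$ to a supremum by majorizing with a polynomial weight and then controlling the geometric mean of that weight---but the mechanism differs. Lemma~\ref{lem:regular} is a more general tool (it handles any sequence with bounded doubling ratio, not just power laws, which is useful elsewhere in the paper), whereas your Stirling estimate is tailored to the power-law case; for this particular corollary the direct computation is arguably more transparent and yields a slightly sharper constant ($e^\alpha$ rather than the $2^{4\alpha}$ that falls out of the lemma with $c=2^\alpha$ and $\kappa\le 4$). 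Both routes comfortably give $C_\alpha\le 12^{\alpha+1}$, and your handling of the special cases a)--d) by inheriting the improved power of $n$ from Theorem~\ref{thm:intro} is exactly right.
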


\medskip

\begin{proof}[Proof of Corollary~\ref{cor:Carl}]
    If $K$ denotes the supremum on the right hand side,
    then $e_k^\ranada(S,F)\le K (k+1)^{-\alpha}$ for all $k<n$.
    Now the statement follows from Theorem~\ref{thm:intro}
    and Lemma~\ref{lem:regular} with $z_n=K n^{-\alpha}$.\\
\end{proof}

We also write explicitly the implication for the polynomial rate of convergence,
which has been presented in the introduction as Table~\ref{table3}.
The polynomial rate of convergence of a sequence $(z_n) \subset [0,\infty)$
is defined by 
\begin{equation} \label{eq:rate}
{\rm rate}(z_n)
 \,:=\, \sup\Big\{ \alpha>0 \ \Big|\ 
 \exists C\ge 0: \forall n\in\N: z_n \le C n^{-\alpha}
 \Big\}.
\end{equation}
We only give the result for the symmetric case, where the bounds are sharp up to logarithmic factors. 
It should be obvious enough what the corresponding results in the non-symmetric case look like.

\medskip

\begin{coro}\label{coro:rate}
Let $X$ and $Y$ be Banach spaces and $S\in\mathcal{L}(X,Y)$. 
For every convex and symmetric
    $F\subset X$, we have
\vspace*{5pt}
\[
{\rm rate}\Big(e_n^{\detnon}(S,F)\Big) 
\;\ge\; {\rm rate}\Big(e_n^{\ranada}(S,F)\Big) \,-\, 1.
\]
If either $Y$ is a Hilbert space or $F$ is the unit ball of a Hilbert space $X$, we even have
\[
{\rm rate}\Big(e_n^{\detnon}(S,F)\Big) 
\;\ge\; {\rm rate}\Big(e_n^{\ranada}(S,F)\Big) \,-\, 1/2.
\]
Moreover, in each of these cases,
equality can occur.
\end{coro}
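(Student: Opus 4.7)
The plan is to derive the rate inequality directly from Corollary~\ref{cor:Carl} (in its symmetric form) and to appeal to the finite/infinite-dimensional examples from \cite{KNW24,KW24} for the matching lower bound.

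For the upper bound on the rate, set $\alpha := {\rm rate}(e_n^{\ranada}(S,F))$. By the definition in \eqref{eq:rate} (read as the best exponent for which a polynomial upper bound holds), for every $\beta < \alpha$ there is a constant $C_\beta \ge 0$ such that $e_k^{\ranada}(S,F) \le C_\beta (k+1)^{-\beta}$ for all $k \in \N_0$. Plug this into Corollary~\ref{cor:Carl} using the symmetric improvement $3/2 \to 1$:
\[
e_{2n-1}^{\detnon}(S,F) \;\le\; C_\beta'\, n^{-\beta+1}\, \sup_{k<n}\bigl((k+1)^\beta e_k^{\ranada}(S,F)\bigr) \;\le\; C_\beta''\, n^{1-\beta},
\]
for every $n\in\N$. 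Since the sequence $e_m^{\detnon}$ is non-increasing in $m$ and $2n-1 \ge n/2$, this translates into $e_m^{\detnon}(S,F) \le C_\beta''' \, m^{1-\beta}$ for all $m\in\N$, so ${\rm rate}(e_n^{\detnon}(S,F)) \ge \beta - 1$. Letting $\beta \nearrow \alpha$ yields the first claim.

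For the Hilbert cases, one repeats the very same argument using the improved exponent $3/2 \to 1/2$ from Corollary~\ref{cor:Carl} (which in turn traces back to parts (c) and (d) of Theorem~\ref{thm:intro}): for symmetric $F$ and $Y$ a Hilbert space one invokes (c), while for $X$ a Hilbert space and $F = B_X$ one invokes (d) (absorbing the oversampling factor $2n-1 \mapsto 4n-1$ into the constant $C_\beta'''$). This produces $e_m^{\detnon}(S,F) \le C\, m^{1/2 - \beta}$ and hence ${\rm rate}(e_n^{\detnon}) \ge \alpha - 1/2$.

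It remains to show that equality is attained in each case. For the general symmetric case, we take the embedding $S\colon \ell_1^m \to \ell_\infty^m$ with $F = B_{\ell_1^m}$ from~\cite{KW24}, which realizes an adaption/randomization gain of main order $n$. For $Y$ a Hilbert space, we take $S\colon \ell_1^m \to \ell_2^m$ from~\cite{KNW24}, with gain of order $n^{1/2}$; and for $X$ a Hilbert space with $F = B_X$, we take $S\colon \ell_2^m \to \ell_\infty^m$ from~\cite{KW24}, also with gain of order $n^{1/2}$. Although the dimension $m$ in these constructions depends on $n$, the infinite-dimensional lifting described in \cite{KNW24,KW24} (following~\cite{He24d}) produces a single operator between infinite-dimensional spaces for which the gap of the stated order is realized simultaneously for all $n$, so that ${\rm rate}(e_n^{\detnon})$ and ${\rm rate}(e_n^{\ranada})$ differ by exactly $1$ (resp.\ $1/2$).

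The arithmetic in the first two paragraphs is routine given Corollary~\ref{cor:Carl}; the only delicate point is the optimality claim, where one must verify that the cited examples genuinely achieve the polynomial rates on both sides (not just the \emph{ratio}). This is however exactly the content of \cite{KNW24,KW24}, so the main obstacle reduces to a careful citation rather than a new argument.
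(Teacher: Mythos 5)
Your proof is correct and takes essentially the same route as the paper: the paper derives Corollaries~\ref{cor:Carl} and~\ref{coro:rate} together from Theorem~\ref{thm:intro} and Lemma~\ref{lem:regular} applied to $z_n = Cn^{-\alpha}$, so your invocation of Corollary~\ref{cor:Carl} as the intermediate step merely spells out what the paper leaves implicit. Your handling of the index shift $2n-1 \mapsto 4n-1$ in case~(d) and your citation of the examples from \cite{KNW24,KW24} for the equality claim match the paper's intended argument.
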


\begin{proof}[Proof of Corollary~\ref{coro:rate}]
    The inequalities are implied by Corollary~\ref{cor:Carl}.
    Equality occurs for the examples from \cite[Cor.\,4.2]{KW24}, 
    \cite[Rem.\,3.4]{KNW24}, and \cite[Rem.\,4.3]{KW24},
    respectively.\\
\end{proof}

%


\goodbreak
\medskip

\section{Examples and related problems}
\label{sec:OPs}

In this section we give further details and extensions of our result and discuss related problems. 
In particular, we 
analyze the individual influence
of adaption and randomization
on the minimal worst-case error, 
and present 
those examples which exhibit the largest gain known to us.
In addition, 
we show how our results apply to 
other \emph{non-linear widths} and to 
approximation based on standard information, 
i.e., function evaluations, 
as shown in Theorem~\ref{thm:intro-std}.
We also present a list of open problems.

\subsection{The individual power of adaption and randomization}
\label{sec:ind}

By the results of the previous sections, 
we know how much adaption and randomization 
can help if they are allowed 
\emph{together}. 
That is, we have a good understanding of the maximal gain from $\Adetnon$ to $\Aran$.
In the symmetric case, 
we even know that 
our bounds are optimal up to logarithmic factors. 
However, our knowledge about the individual 
power of randomization or adaption
still has several gaps.
\medskip

Let us first talk about upper bounds.
Intuitively, it is clear that the gain of randomization or adaption alone cannot be larger than the gain of adaption and randomization together.
Let us make this a corollary.

\begin{coro}\label{coro:individual}
Let $X$ and $Y$ be Banach spaces and $S\in\calL(X,Y)$. For every convex and bounded $F\subset X$ and $n\in\N$, we have
\[
e_{2n-1}^*(S,F) 
\;\le\; C n^{3/2}\,\bigg(\prod_{k<n} e_k^\square(S,F) \bigg)^{1/n},
\]
where $*,\square\in\{\detada, \detnon, \ranada, \rannon\}$
and $C$ is a universal constant.
If $X$ or $Y$ is a Hilbert space or if $F$ is symmetric, the improvements of Theorem~\ref{thm:intro} apply.
\end{coro}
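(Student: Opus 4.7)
The plan is to derive the corollary from Theorem~\ref{thm:intro} by a monotonicity sandwich: the two ``intermediate'' minimal errors $e_n^\detada$ and $e_n^\rannon$ lie between $e_n^\ranada$ and $e_n^\detnon$, so the latter pair (which Theorem~\ref{thm:intro} directly compares) already controls all sixteen combinations of $*$ and $\square$ at once.

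I would first establish the hierarchy
\[
e_n^\ranada(S,F) \;\le\; e_n^\square(S,F)
\qquad\text{and}\qquad
e_n^*(S,F) \;\le\; e_n^\detnon(S,F)
\]
for every $*,\square\in\{\detada,\detnon,\ranada,\rannon\}$. These are the standard inclusion-monotonicity inequalities of information-based complexity: $\Adetnon$ is the most restrictive of the four classes and is contained in each $\A_n^*$, while $\Aran$ is the most general and contains each $\A_n^\square$. Passing to an infimum over a smaller class only increases the minimal error, so both inequalities follow.

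Next, I would simply chain Theorem~\ref{thm:intro} between these bounds to obtain
\[
e_{2n-1}^*(S,F) \;\le\; e_{2n-1}^\detnon(S,F) \;\le\; 12\, n^{3/2}\bigg(\prod_{k<n} e_k^\ranada(S,F)\bigg)^{1/n} \;\le\; 12\, n^{3/2}\bigg(\prod_{k<n} e_k^\square(S,F)\bigg)^{1/n},
\]
so the universal constant $C=12$ works. For the improved versions in the symmetric or Hilbert-space cases, the same argument applies verbatim after replacing the general form of Theorem~\ref{thm:intro} by the appropriate one of parts (a)--(d); only the middle step changes, while the outer monotonicity inequalities are insensitive to these structural hypotheses.

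The main obstacle, as pointed out in Section~\ref{sec:algorithms}, is the measurability caveat: since $\Adet$ need not sit inside $\Aran$, the inequality $e_n^\ranada\le e_n^\detada$ is not entirely automatic. I would handle this exactly as in \cite[Section~4.3.3]{NW1}, by arguing that near-optimal deterministic algorithms may be chosen measurable (for instance continuous or even linear, which exist for convex and bounded $F$) and hence genuinely belong to $\Aran$; the boundedness hypothesis on $F$ enters precisely here, ensuring that the semi-norm topology on $F-F$ and the resulting Borel structure on $F$ are well behaved.
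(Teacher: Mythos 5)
Your sandwich argument breaks down for $\square=\detada$ when $F$ is non-symmetric. Your chain requires $e_k^\ranada \le C\,e_k^\square$ with a \emph{universal} constant $C$; after the measurability fix via Lemma~\ref{lem:meas}, you only obtain $e_k^\ranada \le e_k^\rannon \le e_k^\detnonmb \le 8\,e_k^\detnon$. To get from $e_k^\detnon$ down to $e_k^\detada$ you must invoke the bound from \cite{Novak95b}, which for non-symmetric convex sets is only $e_k^\detnon \le 4(k+1)^2\,e_k^\detada$. Feeding this into your geometric mean yields an extra factor of order $(n!)^{2/n}\sim n^2$, turning $n^{3/2}$ into roughly $n^{7/2}$. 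The paper avoids this entirely: it never routes the bound through $e^\detnon/e^\detada$, but instead lower-bounds \emph{each} of the four minimal errors directly by $\tfrac12 h_{2n-1}(S,F)$ --- for the deterministic classes via $e_n^\detada \ge b_n(S,F) \ge h_n(S,F)$ (Proposition~\ref{prop:upper-ada} and Proposition~\ref{prop1}), for the randomized classes via Proposition~\ref{prop:lower-h} --- and upper-bounds each by $16\,c_n(S,F)$ (Proposition~\ref{prop:upper-c} and, for randomized algorithms, $\A_n^\detnonmb\subset\A_n^\rannon$ together with Lemma~\ref{lem:meas}); the conclusion then follows from Theorem~\ref{thm:widths} alone.

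A secondary issue: your resolution of the measurability obstacle is stated too loosely. You assert that near-optimal deterministic algorithms ``may be chosen continuous or even linear, which exist for convex and bounded $F$.'' Existence of near-optimal \emph{linear} algorithms is only guaranteed for convex \emph{and symmetric} $F$; for general convex bounded $F$ this is not known. The actual mechanism is Lemma~\ref{lem:meas}, which shifts $F$ so that $0\in F$, passes to the symmetric set $F-F$, applies Math\'e's result \cite[Thm.~11(v)]{Mathe90} there, and comes back via the identity $c_n(S,F-F)=2\,c_n(S,F)$. That argument gives $e_n^\detnonmb \le 8\,e_n^\detnon$ for non-adaptive algorithms only, which is precisely why the chain through $e^\detada$ cannot be closed by a constant and why the paper instead compares every class to the width quantities $c_n$ and $h_n$ directly.
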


\medskip

This corollary is not as obvious as it seems at first glance.
The problem is that, due to the assumed measurability of randomized algorithms, we do \emph{not} have the relation $\mathcal{A}_n^\detnon \subset \mathcal{A}_n^\rannon$.
What we do have is the relation $\mathcal{A}_n^\detnonmb \subset \mathcal{A}_n^\rannon$, 
where $\mathcal{A}_n^\detnonmb$ denotes the class of all $(\mathcal{B}_F,\mathcal{B}_Y)$-measurable deterministic and non-adaptive algorithms with the corresponding minimal worst-case error denoted by $e_n^\detnonmb$.
The issue is fixed by the following lemma, which shows that 
measurable deterministic and non-adaptive algorithms are (roughly) as good as arbitrary deterministic non-adaptive algorithms.
i.e., there is no real difference between $\mathcal{A}_n^\detnonmb$ and $\mathcal{A}_n^\detnon$.

\begin{lemma}\label{lem:meas}
    Let $X$ and $Y$ be Banach spaces and $S\in\calL(X,Y)$. For every bounded and convex $F\subset X$ and $n\in\N_0$, we have
     \[
     e_n^\detnonmb(S,F) \,\le\, 8\, e_n^\detnon(S,F).
    \]
\end{lemma}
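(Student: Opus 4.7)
Since $N_n\co X\to\R^n$ is linear and continuous, it is Borel measurable, so the only source of non-measurability in a non-adaptive algorithm $\phi\circ N_n$ is the reconstruction $\phi\co\R^n\to Y$. Fix $\eps>0$ and choose a near-optimal deterministic non-adaptive algorithm $\phi\circ N_n$ with worst-case error at most $e:=e_n^\detnon(S,F)+\eps$. The plan is to replace $\phi$ by a Borel-measurable $\tilde\phi$ whose composition $\tilde\phi\circ N_n$ has worst-case error at most a constant factor times $e$.

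The starting point is a diameter bound. For each $y\in N_n(F)$, any two points $f,f'\in F_y:=F\cap N_n^{-1}(y)$ produce the same information, so $\|S(f)-\phi(y)\|\le e$ and $\|S(f')-\phi(y)\|\le e$; the triangle inequality yields ${\rm diam}\,S(F_y)\le 2e$. Therefore, if for each $y\in N_n(F)$ we pick any $f_y\in F_y$ and set $\tilde\phi(y):=S(f_y)$ (and $\tilde\phi(y):=0$ elsewhere), then for every $f\in F$ the element $f_{N_n(f)}$ lies in the same fiber as $f$, so
\[
\|S(f)-\tilde\phi(N_n(f))\|\;\le\;{\rm diam}\,S(F_{N_n(f)})\;\le\;2e.
\]
This already costs a factor of $2$ and reduces the problem to a measurable-selection question.

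The main obstacle is to make the selection $y\mapsto f_y$ Borel measurable. The plan is to apply a measurable selection theorem of Kuratowski--Ryll-Nardzewski type to the multifunction $\Psi\co y\mapsto F_y$, after two standard reductions: first, restrict attention to a separable closed convex subset $F_0\subseteq F$ on which $\phi\circ N_n$ still has worst-case error at most $e+\eps'$ (obtained, e.g., by taking the closed convex hull of a countable subset of $F$ whose $N_n$-image is dense in $N_n(F)$); and second, pass to the closed linear span of $F_0$, which is a separable closed subspace of $X$ and hence Polish. Continuity of $N_n$ then shows that $\Psi$ is a Borel multifunction with non-empty closed convex values, so the selection theorem yields a Borel selector $y\mapsto f_y$; composing with the continuous map $S$ gives the required Borel-measurable $\tilde\phi$. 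The remaining gap between the natural constant $2$ and the stated constant $8$ absorbs the slack incurred in these reductions, or equivalently the slack in an alternative piecewise-constant construction over a countable Borel partition of $\R^n$, where an open-mapping-style perturbation controls $\|S(f)-S(f_B)\|$ for $f$ and $f_B$ in nearby fibers.
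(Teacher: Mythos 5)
Your approach is genuinely different from the paper's and, as written, has a real gap. The paper does not attempt a measurable selection at all: it shifts so that $0\in F$, observes that $F-F$ is a bounded, convex, symmetric set (hence the unit ball of a semi-norm), and then chains three two-sided estimates: $e_n^\detnonmb(S,F)\le e_n^\detnonmb(S,F-F)\le 2\,e_n^\detnon(S,F-F)$ by Math\'e's theorem \cite[Thm.~11(v)]{Mathe90} (which is stated for unit balls, so the reduction to $F-F$ is essential), $e_n^\detnon(S,F-F)\le 2\,c_n(S,F-F)$, the scaling identity $c_n(S,F-F)=2\,c_n(S,F)$, and finally $c_n(S,F)\le e_n^\detnon(S,F)$. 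These multiply to exactly $8$. In other words, the factor $8$ is not ``slack'' that you should expect to accrue from perturbation arguments; it is the precise product of factors $2$ in this chain, and your diameter bound naturally targets $2$, not $8$.

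The gap in your argument is the measurable selection step. Kuratowski--Ryll-Nardzewski requires a Polish (in particular, separable) target, and neither $X$ nor $Y$ is assumed separable, nor is $S(F)$ separable merely because $F$ is bounded. Your proposed fix --- restricting to a separable closed convex $F_0\subset F$ whose $N_n$-image is dense in $N_n(F)$ --- does not close the gap: density in $\R^n$ does not give $N_n(F_0)\supseteq N_n(F)$ (boundary fibers can be missed entirely), so for $f\in F$ with $N_n(f)\notin N_n(F_0)$ your $\tilde\phi$ is undefined or defaults to $0$, and the error bound fails. The ``open-mapping-style perturbation'' you invoke to compare $S$-values across nearby fibers is not available here: $F$ is an arbitrary bounded convex set, and nothing forces nearby fibers $F_y, F_{y'}$ to contain points whose $S$-images are close; a thin sliver $F$ gives explicit counterexamples. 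The fiber-diameter bound you derive is correct and is a good first step, but it only controls within a fiber, not between fibers. As it stands, the selection route needs substantially more structure (e.g., compactness or separability) than the hypotheses of the lemma provide, which is precisely why the paper routes through the symmetric case where Math\'e's result (which actually produces continuous near-optimal algorithms) is available.
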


\medskip

Lemma~\ref{lem:meas} is proven in \cite[Thm.~11(v)]{Mathe90} for the case that $F$ is the unit ball of the space $X$. 
(In fact, it is shown that continuous algorithms are almost optimal.) We show below how it can be transferred to general convex classes. It is open
whether the factor 8 can be removed and 
to what extent measureable algorithms are as good as non-measurable algorithms also in other settings,
see \cite[Section~4.3.3]{NW1} and \cite{NR89}.

\medskip

\begin{proof}[Proof of Lemma~\ref{lem:meas}]
Without loss of generality we assume that $0\in F$; the error numbers do not change if we shift $F$. Then we have $F\subset F-F$. The class $F-F$ is convex, bounded and symmetric, and hence the unit ball of a norm 
on $X$. 
By \cite[Thm.~11(v)]{Mathe90}, it holds that
\[
 e_n^\detnonmb(S,F)
\,\le\, e_n^\detnonmb(S,F-F)
\,\le\, 2\,e_n^\detnon(S,F-F).
\]
On the other hand, \cite[Lemma~4.3]{NW1} and Proposition~\ref{prop:upper-c} give
$$e_n^\detnon(S,F-F)
\,\le\, 2\,c_n(S,F-F)
\,=\, 4\,c_n(S,F)
\,\le\, 4\,e_n^\detnon(S,F).$$
\end{proof}

We can now prove Corollary~\ref{coro:individual}.
\medskip

\begin{proof}[Proof of Corollary~\ref{coro:individual}]
All the minimal errors are bounded from below by $\frac12 \cdot h_{2n-1}(S,F)$.
For $\mathcal A_n^\ranada$ and $\mathcal A_n^\rannon$, this follows from Proposition~\ref{prop:lower-h}.
For $\mathcal A_n^\detada$ and $\mathcal A_n^\detnon$, it follows from \cite[Prop.~1]{Novak95b} and Proposition~\ref{prop1}. 
On the other hand, all the minimal errors are bounded from above by $16\cdot c_n(S,F)$.
For $\mathcal A_n^\detada$ and $\mathcal A_n^\detnon$, this follows from Proposition~\ref{prop:upper-c}.
Lemma~\ref{lem:meas} implies that the upper bound also holds for $A_n^\detnonmb$
and thus for $\mathcal A_n^\ranada$ and $\mathcal A_n^\rannon$. 
Hence, the statement follows from Theorem~\ref{thm:widths}.\\
\end{proof}

We 
summarize the state of the art for the maximal gain between the different classes of algorithms, 
see Table~\ref{table:individual}. 
For this, 
let us define the ``maximal gain function''
\[
 {\rm gain}(*,{\square}, \triangle)
 \,:=\, \sup_{\substack{S\in\calL(X,Y)\\ F\subset X \text{ is } \triangle}}
\left({\rm rate}\Big(e_n^{\square}(S,F)\Big) - {\rm rate}\Big(e_n^{*}(S,F)\Big) \right), 
\]
where 
the rate function is defined in~\eqref{eq:rate}, 
$*,\square\in\{\detada$, $\detnon$, $\ranada$, $\rannon\}$
and 
$\triangle\in\{\text{convex}$, $\text{ convex+symmetric}\}$.

\begin{table}[H]
\begin{center}
\begin{tabular}{|c||>{\centering}p{15mm}|>{\centering}p{15mm}|>{\centering}p{15mm}|c|}
\hline
\parbox[0pt][2.2em][c]{2cm}{Gain from } & \multicolumn{2}{|c|}{$\Adetnon$ } & $\Arannon$ & \parbox[0pt][0em][c]{15mm}{\centering $\Adet$} \\ \hline
\backslashbox{for}{to} & $\Adet$ & $\Arannon$ & \multicolumn{2}{|c|}{$\Aran$ }\\\hline\hline
\parbox[0pt][2.2em][c]{4cm}{$F$ convex+symmetric} & 0 &  $\left[\frac12,1\right]$ & 
$ 1 $ &  1 \\\hline
\parbox[0pt][2.2em][c]{2cm}{$F$ convex} & $\left[\frac12,\frac32\right]$ & $\left[\frac12,\frac32\right]$ & $\left[1, 
\frac32\right]$ & $\left[1,\frac32\right]$ \\\hline
\end{tabular}
\end{center}
\caption{Maximal gain in the rate of convergence between different classes of algorithms using linear information.}
\label{table:individual}
\end{table}

The upper bounds in Table~\ref{table:individual} are given by Corollary~\ref{coro:individual} and Proposition~\ref{prop:upper-c}.
We now turn to the lower bounds on the (individual) gain of adaption and randomization. For this, we collect specific examples:

\medskip
\begin{itemize}[leftmargin=5mm]
\item $F$ convex+symmetric:  \\[-2mm]
    \begin{enumerate} 
    \item
    \emph{Power of adaption, deterministic} ($\Adetnon\to\Adet$): \\
    There is no gain in the rate of convergence. As stated in Proposition~\ref{prop:upper-c},
    we have $e_n^\detnon(S,F)\le 2\cdot e_n^\detada(S,F)$ for any $S\in\calL$, 
    see for example \cite{CW04,No96,NW1,TWW88}. 
    An example where adaptive algorithms are slightly better can be found in~\cite{KN90}.
    \medskip

    \item
    \emph{Power of randomization, non-adaptive} ($\Adetnon\to\Arannon$): \\
    Randomization of non-adaptive algorithms can yield a gain of 1/2 
    for certain Sobolev embeddings. 
    The simplest case is from $W^k_2([0,1])$ into $L_\infty([0,1])$, 
    where the optimal rate with deterministic algorithms is $n^{-k+1/2}$
    for $k> 1/2$.
    This is a classical result 
    of \cite{ST67}.
        Using non-adaptive randomized algorithms, one can get the upper bound 
    $n^{-k} \log n$, see 
    \cite{Ma91} and 
    \cite{BKN18,FD08,He92} for further results 
    and extensions.
    
    \medskip
    
    \item
    \emph{Power of adaption, randomized} ($\Arannon\to\Aran$): \\
The paper \cite{KNW24} shows that one may gain a factor 
    $(n/\log n)^{1/2}$
    for the embedding $S\colon \ell_1^m \to \ell_2^m$ and suitable (large) $m$ 
    and 
    in  \cite{KW24} it is proved that one may gain,
    up to logarithmic terms, 
    a factor of polynomial order $n$  
    for the embedding $S\colon \ell_1^m \to \ell_\infty^m$
    with appropriate~$m$.  
    Note that this example (or more precisely, an infinite-dimensional version of it) shows the 
    optimality of Corollary~\ref{coro:rate}
    and the factor $n$ in Theorem~\ref{thm:intro-std}. 
    We do not know if a gain of 1 can 
    also occur in the 
    transitions $\Adetnon\to\Arannon$.
        
    \medskip
    
    \item
    \emph{Power of randomization, adaptive} ($\Adet\to\Aran$): \\ 
If we employ Example (1), as well as $\A_n^\detnonmb\subset\Arannon$ and 
Lemma~\ref{lem:meas}, 
we can take the examples from (3),
to obtain the same gain.  

\end{enumerate}
\medskip

\item $F$ convex:  \\[-2mm] 
    \begin{enumerate} 
\setcounter{enumi}{4}
    \item
    \emph{Power of adaption, deterministic} ($\Adetnon\to\Adet$): \\
    Consider $S={\rm id}\in\calL(\ell_\infty,\ell_\infty)$, 
    i.e., approximation in $\ell_\infty$, for inputs from 
    $$ \hspace{18mm}
    F= \{ x \in \ell_\infty \mid x_i \ge 0, \sum x_i \le 1, x_k \ge x_{2k}, x_k \ge x_{2k+1} \}.
    $$ 
    Then one can prove a lower bound $c (\sqrt{n} \log n)^{-1}$ for non-adaptive 
    algorithms while a simple adaptive algorithm 
    (using ``function values'', i.e., values of coordinates of $x$) gives the upper bound 
    $(n+3)^{-1}$;  see \cite{Novak95a} for details. 
    This shows a gain of 1/2. 
    
    In the case of 
    standard information, i.e., function values, 
 see also Section~\ref{sec:sampling}, 
    there are more 
    extreme examples, where adaption yields a gain 
    up to the order $n$, see again 
    \cite{Novak95a}. 
    
    \medskip
    
    \item
    \emph{Remaining Cases}: \\ 
    Clearly, the examples given in the symmetric case also apply here. 
    This gives the remaining lower bounds of Table~\ref{table:individual}. \\
    We do not know any example of non-symmetric $F$, where the gain of adaptive over non-adaptive randomized algorithms, 
    or of randomized over deterministic algorithms (adaptive or not), is larger than the corresponding gain in the 
    symmetric case. 
    
    \medskip

    \end{enumerate}
\end{itemize}

Let us highlight 
a few open problems indicated by Table~\ref{table:individual} that we assume to be 
of particular interest.
In particular, note that 
it is still possible that the maximal gain of 
adaption and randomization is $1$ also for 
non-symmetric convex sets.

\goodbreak
\begin{OP}
\phantom{h}
\begin{enumerate}
\itemsep=3mm
\item Bounds for individual $n$: \\
Verify whether $e_{2n}^\detnon(S,F)\le C\, n\,
e_n^\ranada(S,F)$ for some $C>0$ and all convex and symmetric $F$. 
See also \cite[Thm.~8.6]{Pietsch-s}.
Does a similar bound hold even for all convex classes $F$?

\item Power of adaption: \\
Is there 
some $S\in\calL$ and convex $F$ such that 
$$e_{2n}^{\detada}(S,F)\le C \, 
n^{-1}\, e_n^{\detnon}(S,F)$$ 
for all $n\in\N$? 

\item Power of randomization: \\
Is there 
some $S\in\calL$ and convex $F$ such that 
$$
e_{2n}^{\rannon}(S,F)\le  C \, n^{-1}\, 
e_n^{\detnon}(S,F)
$$ 
for all $n\in\N$?

\item Power of randomization for symmetric sets: \\
Is there 
some $S\in\calL$ and convex and symmetric $F$ 
such that 
$$
e_{2n}^{\rannon}(S,F)\le  C \, n^{-\alpha}\,
e_n^{\detnon}(S,F)
$$ 
for all $n\in\N$ and some $\alpha>\frac12$? 
\end{enumerate}
\end{OP}


\medskip
\subsection{Other widths} \label{sec:other-width}

Our main emphasis was to compare different classes of algorithms. 
However, the study of widths is clearly not only of interest in IBC. 
These notions are used in many areas, including approximation theory, geometry, and the theory of Banach spaces. 

We indicate how our results apply here, and give further references.

\subsubsection{Kolmogorov widths} \label{sec:dn}

Possibly most prominent among the widths are the 
\emph{Kolmogorov widths} 
\[
d_n(S,F) \,:=\, \inf_{\substack{M \subset Y\\ \dim(M)\le n}}\,
\sup_{f\in F}\, 
\inf_{g\in M}\,  \norm{S(f)-g}, 
\]
%
which describe how well $S(f)$ for $f\in F$ can be approximated by elements from an affine linear subspace. Such a \emph{best approximation} can generally not be found by an algorithm and so, 
$d_n$ is usually not a suitable benchmark for algorithms. 
Still, it is a natural ``geometric'' quantity. 

If $F=B_X$, then it is known that the Kolmogorov numbers $d_n$ are 
\emph{dual} to the Gelfand numbers, 
and that the Hilbert numbers are self-dual, see~\cite{Bauhardt,Pietsch-s,Pie87}.
Similar statements might hold for general $F$, 
and this could be used to extend our results to $d_n$. 
However, one may also repeat the proofs of our results almost verbatim for $d_n$.
By this, we obtain for $S\in\calL(X,Y)$ and convex $F\subset X$ that 
\begin{equation} \label{eq:dn}
d_n(S,F) 
\;\le\; (n+1)^{\alpha}\,\bigg(\prod_{k=0}^n h_k(S,F) \bigg)^{1/(n+1)}
\end{equation}
with $\alpha=1$ if $F$ is symmetric and $\alpha=3/2$ otherwise.
The symmetric case is proven in \cite{U24};
the modifications for the non-symmetric case are analogous.
We omit the details.

Of course, the same bound holds with the larger $b_n$ in place of the smaller $h_n$. 
The ratio of Kolmogorov and Bernstein widths was studied 
at least since the paper \cite{MH63} from Mityagin and Henkin (1963). 
They proved that $d_n(S,F)\le(n+1)^2
\, b_n(S,F)$ for convex and symmetric $F$, 
and conjectured that one has indeed 
$d_n(S,F)\le(n+1)\, b_n(S,F)$. 
See also \cite{Novak95b} for the 
non-symmetric case. 
The above considerations show that this old conjecture is true, at least for regular sequences and up to constants. 
Note again that this was known, see~\cite{Pie07}.

Similar problems appear in geometry, where often 
different notions are used, such as successive radii,
or inner and outer radii. 
In this context, their ratio was mainly 
considered for sets in Hilbert spaces, 
see \cite{Merino13,Merino17,Pe87,Pu79}, 
where one can find further references. 
Results for general norms can be found, e.g., in \cite[Theorem 5.1]{Merino17}. 
These bounds are improved by 
the inequalities above.

\begin{remark} 
For $S$ being the identity on a Hilbert space, 
it is conjectured that the 
regular simplex provides 
the largest gap 
in the non-symmetric case and the regular cube or the regular 
cross-polytope provide 
the largest gap 
in the symmetric case. 
The Kolmogorov and Bernstein widths of these 
sets are completely known \cite{Br05,Pu80}, 
but the proven general bounds are slightly 
weaker. 
\end{remark}

\subsubsection{Linear widths} \label{sec:linear} 
If we replace the requirement that the approximation space is linear by the requirement that the approximation procedure is linear,
we end up with 
the 
\emph{approximation numbers of $S$ on~$F$}, 
or \emph{linear widths of $F$ with respect to $S$}, 
i.e., 
\begin{equation*}
a_n(S,F) \;:=\; 
\inf_{\substack{L_1,\dots, L_n\in X'\\ \phi_0, 
\dots,\phi_n\in Y}} \,
\sup_{f\in F} \,
\norm{S(f) - \phi_0 - \sum_{i=1}^n L_i(f)\, \phi_i}.
\end{equation*}
This corresponds to the minimal worst-case 
error of affine 
algorithms that use at 
most $n$ pieces of linear information.

If $F$ is the 
unit ball of $X$, 
then it is known from 
\cite[Thm.~8.4]{Pietsch-s} 
(based on~\cite{KS71}) that
\begin{equation}  \label{an-versus-cn} 
 a_n(S,B_X) \,\le\, (1+\sqrt{n})\, c_n(S,B_X).
\end{equation} 
Note that $a_n(S,B_X)$ and $c_n(S,B_X)$ are 
equal if 
$X$ is a Hilbert space or if $Y$ has the 
metric extension property, 
see~\cite{Pietsch-s} or~\cite{CW04,Mathe90} 
for extensions.
Together with Theorem~\ref{thm:widths}, 
inequality \eqref{an-versus-cn}
implies 
\begin{equation*}
a_n(S,B_X) 
\;\le\; 2\,(n+1)^{3/2} \,\bigg(\prod_{k=0}^n h_k(S,B_X) \bigg)^{1/(n+1)}.
\end{equation*}
The exponent $3/2$ can be replaced by 
$1$ in the aforementioned cases
or in the case that $Y$ is a Hilbert space since 
then we have the identity $a_n(S,B_X)=d_n(S,B_X)$.
Again, for this symmetric case, 
the result is essentially  known, 
see \cite[6.2.3.14]{Pie07}, 
and we only remove an oversampling constant 
compared to the known estimate.
It is a major open problem, whether 
the exponent $3/2$ can be reduced 
to $1$, 
see \cite[Open Problem~5]{Pie09}. 

This problem is of particular 
interest in the theory of s-numbers, 
where $F$ is assumed to be the unit ball of $X$. 
Note that the approximation numbers form 
the largest scale of s-numbers
while the Hilbert numbers are the 
smallest scale of s-numbers,
see \cite[Ch.~2]{Pie87}.


\medskip

\subsubsection{Non-linear widths}
    In connection with nonlinear approximation, 
    also several types of \emph{non-linear widths} appear in the literature, see e.g.~\cite{CDPW,DHM,DKLT93,Siegel,Ste74,Ste75,Yar}. 
    Let us introduce two of them to illustrate the relation to our setting. 
    First, the \emph{manifold widths} of $F\subset X$ with respect to $S\in\calL(X,Y)$ are defined by 
    \[
    \delta_n(S,F) \,:=\,  
    \inf_{\substack{N\in C(X,{\R^n}) \\ \phi\in C(\R^n,Y)}}\, 
    \sup_{f\in F}\, \bigl\|S(f)-\phi\left(N(f)\right)\bigr\|,
    \]
    where $C(X,Y)$ denotes the class of continuous mappings from $X$ to $Y$. 
    Moreover, the \emph{continuous co-widths} of $F\subset X$ w.r.t.~$S\in\calL(X,Y)$ are 
    \[
    \widetilde{c}_n(S,F) \,:=\,  
    \inf_{N\in C(X,{\R^n})}\, 
    \sup_{\substack{f,g\in F: \\ N(f)=N(g)}}
    \frac12\, \bigl\|S(f)-S(g)\bigr\|. 
    \]
    These numbers 
    correspond (up to a factor 2) to minimal errors 
    for approximating $S$ over~$F$ based 
    on $n$ non-adaptive continuous measurements.
    Comparing these definitions to minimal 
    errors and Gelfand numbers,
    we see that this approach is more general 
    in the sense that the information 
    mapping (or parameter selection map) $N$ 
    is not built from linear functionals,
    but less general in the sense that a discontinuous adaptive choice of the one-dimensional measurements (like, e.g., a bisection method) is not allowed.
    These quantities seem to appear in the 
    literature only in the special 
    case of $S\in\calL(X,X)$ being the identity. 
    We naturally extend the definitions, 
    but only comment on this special case in the following.
    
    First, note that another important (and very early) concept are the Aleksandrov widths, see~\cite{Al56,DKLT93}, which replace $\R^n$ by more general $n$-dimensional \emph{complexes}.
    However, it is shown in~\cite{DKLT93} that all these quantities are equivalent up to constants and oversampling. 

    Second, it is clear from the definitions that these 
    widths are smaller than $e_n^\detnon$ and $c_n$, respectively. 
    Moreover, it is shown in~\cite{DHM} that they are lower-bounded by the Bernstein widths $b_n$. So, we have everything we need to apply our technique: 
    The upper bounds 
    in Theorem~\ref{thm:widths} and 
    in Table~\ref{table3} 
    are also applicable to the maximal gain 
    in the rate of convergence when passing 
    from linear to arbitrary continuous 
    measurement maps $N\colon F \to \R^n$, 
    at least for $S$ being the identity. 
    
    In fact, combining Theorem~\ref{thm:widths}, in the form of Corollary~\ref{cor:Carl}, 
    with \cite[Thm.~3.1]{DHM} (and noting that they use the notation ``$d_n$'' for ``$\delta_n$''), 
    we obtain, e.g., for all convex $F\subset X$ that 
    \begin{equation}
    c_{2n}({\rm id}_X,F) 
    \;\le\; C_\alpha \, n^{-\alpha+3/2}\cdot \sup_{k< n} \, (k+1)^{\alpha}\, \delta_k({\rm id}_X,F),
    \end{equation}
    where $C_\alpha \le 16^{\alpha+1}$. 
    Again, the exponent $3/2$ can be replaced by 1 for symmetric $F$ and,  
    taking Section~\ref{sec:dn} into account, 
    this bound also holds with $d_{2n}$ in place of $c_{2n}$. 
    
    For a very different 
    (and surprising for us) result on adaptive continuous measurements that shows an exponential speed-up, see~\cite{KNU25}.
    
\medskip
\subsection{Sampling numbers and other 
classes of information} \label{sec:sampling}

As already indicated in the introduction, our proof 
technique can also be employed for sampling recovery 
in the uniform norm. 
In fact, our upper bound from Theorem~\ref{thm:intro} 
also holds if the class of deterministic, non-adaptive 
algorithms is further restricted to those using only 
certain restricted information. 

For this, we consider a set of linear functionals $
\Lambda\subset X'$, which we call the 
\emph{admissible information}. 
The $n$-th minimal worst-case 
error for approximating $S$ over $F$ with 
information from $\Lambda$
is defined by 
\[
e_n^\detnon(S,F,\Lambda) 
\;:=\; 
\inf_{\substack{L_1,\dots,
L_n\in\Lambda \\ \phi\co \R^n\to Y}} \,
\sup_{f\in F} \, 
\norm{S(f) - \phi(L_1(f),\dots, L_n(f))}.
\]
Moreover, 
we call $\mathcal N\subset X'$ a \emph{norming set} of 
$X$ if 
\[
\norm{f}=\sup\{\,|\lambda(f)|\co \lambda\in \mathcal N\} 
\qquad \text{ for all } \quad f\in X.
\]
With this, we obtain the following 
generalization of Theorem~\ref{thm:intro}.

\medskip

\begin{theorem}\label{thm:info}
Let $X$ and $Y$ be Banach spaces and $S\in\calL(X,Y)$. 
Moreover, let $\Lambda_S\subset\Lambda\subset X'$, 
where $\Lambda_S$ is of the form 
$\Lambda_S=\{\lambda\circ S\co \lambda\in \mathcal N\}$ 
and $\mathcal N\subset Y'$ is a norming set of $Y$. 
Then, for every convex 
$F\subset X$ and $n\in\N$, we have
\begin{align*}
e_{2n-1}^\detnon(S,F,\Lambda) 
\;&\le\; \left(\prod_{k<2n} e_k^{\detnon}(S,F,\Lambda)\right)^{1/(2n)}\\
&\le\; 12\, n^{3/2}\,
\bigg(\prod_{k<n} e_k^\ranada(S,F) \bigg)^{1/n}.
\end{align*}
The corresponding improvements from 
Theorem~\ref{thm:intro} apply 
if $F$ is additionally symmetric or the unit ball of a 
Hilbert space $X$.
\end{theorem}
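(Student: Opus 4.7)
The plan is to follow the derivation of Theorem~\ref{thm:intro} given at the start of Section~5 and track the effect of restricting the admissible information to $\Lambda$. For the $\Lambda$-restricted setting, let
\[
c_n^\Lambda(S,F) := \inf_{L_1,\dots,L_n \in \Lambda}\, \sup_{\substack{f,g \in F \\ L_k(f)=L_k(g)}} \tfrac{1}{2}\|S(f)-S(g)\|
\]
be the corresponding modified Gelfand width. The $\Lambda$-version of Proposition~\ref{prop:upper-c}, referenced in the text just before the theorem, gives $e_n^\detnon(S,F,\Lambda) \le 2\, c_n^\Lambda(S,F)$. Since $\Lambda \supset \Lambda_S$, monotonicity of the infimum yields $c_n^\Lambda(S,F) \le c_n^{\Lambda_S}(S,F)$. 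If we can establish the analog of Theorem~\ref{thm:widths} for $c_n^{\Lambda_S}$ in place of $c_n$, combining with Proposition~\ref{prop:lower-h} reproduces exactly the chain of estimates that proves Theorem~\ref{thm:intro}, and the same constant $12$ and exponent $3/2$ (with the four special-case improvements) emerge.

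The new ingredient is therefore the bound
\[
c_n^{\Lambda_S}(S,F) \;\le\; (n+1)^{\alpha}\,\bigg(\prod_{k=0}^n h_k(S,F)\bigg)^{1/(n+1)}
\]
with $\alpha$ as in Theorem~\ref{thm:widths}. The key reformulation is that, since $L_k = \ell_k \circ S$ with $\ell_k \in \mathcal{N}$, the constraint $L_k(f)=L_k(g)$ depends only on $y_j := S(f_j) \in S(F)$, so
\[
c_n^{\Lambda_S}(S,F) \,=\, \inf_{\ell_1,\dots,\ell_n \in \mathcal{N}}\, \sup_{\substack{y_1,y_2 \in S(F)\\ \ell_k(y_1)=\ell_k(y_2)}} \tfrac{1}{2}\|y_1-y_2\|.
\]
This is a Gelfand-type width of $S(F)\subset Y$ in which the admissible functionals are restricted to $\mathcal{N}$ instead of all of $Y'$. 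I would then revisit the proof of Theorem~\ref{thm:widths} and observe that each Hahn--Banach-type selection of a norm-witnessing functional $\ell \in Y'$ for some element $y \in Y$ can be replaced, up to an arbitrarily small error $\varepsilon>0$, by a functional from $\mathcal{N}$ achieving $|\ell(y)| \ge (1-\varepsilon)\|y\|$. Letting $\varepsilon \to 0$ absorbs the loss into the supremum, so no constants are affected.

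The hard part will be carrying out this replacement uniformly across all four special-case improvements, particularly case (d), where the oversampling $c_n \to c_{2n}$ is tied to specific Hilbert-space features (spectral or Gram--Schmidt-type arguments in $\ell_2$). Still, since the Hilbert numbers $h_k(S,F)$ are themselves defined via compositions $BSA$ with $B \in \calL(Y,\ell_2)$, and the norming property of $\mathcal{N}$ can be transported through $B$ (or simply invoked after passing to a countable dense sub-family), I expect the replacement to go through cleanly. Once the norming-set analog of Theorem~\ref{thm:widths} is in place, substituting it into $e_n^\detnon(S,F,\Lambda) \le 2\,c_n^{\Lambda_S}(S,F)$ and then applying Proposition~\ref{prop:lower-h} to pass from $h_{2n-1}$ to $e_n^\ranada$ reproduces Theorem~\ref{thm:info} with the identical numerical constants as Theorem~\ref{thm:intro}.
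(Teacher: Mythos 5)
Your proposal follows the same structure as the paper's proof: bound $e_n^{\detnon}(S,F,\Lambda)$ by $2\,c_n(S,F,\Lambda)$ (the $\Lambda$-modified Gelfand width), then rerun the proof of Theorem~\ref{thm:widths} observing that the Hahn--Banach selection of $\ell_k$ can be made from the norming set $\mathcal N$ up to an $\eps$-loss, so that $L_k=\ell_k\circ S\in\Lambda_S\subset\Lambda$ and the inductively constructed $f_k,g_k$ still realize $c_k(S,F,\Lambda)$ up to $1+\eps$; conclude by Proposition~\ref{prop:lower-h} exactly as in the proof of Theorem~\ref{thm:intro}. This works cleanly in the general and symmetric cases, and your reformulation of $c_n^{\Lambda_S}$ as a restricted Gelfand width of $S(F)$ in $Y$ is a correct (if unnecessary) conceptual aside.

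Where your reasoning drifts is case (d). You suggest the issue is about ``transporting the norming property through $B$'', but $B$ maps $Y$ to $\ell_2$ and the norming set $\mathcal N\subset Y'$ only enters through the Hahn--Banach step, which you have already handled. The actual subtlety in the Hilbert-input case of Theorem~\ref{thm:widths} is that the inductive subspaces $M_k$ are cut by $k$ functionals $L_j\in\Lambda_S$ \emph{and} $k$ additional orthogonality constraints $\langle p_j,\cdot\rangle\in X'$; the latter are generally not in $\Lambda$. So $M_k$ is not the kernel of a family from $\Lambda$, and it is not immediate from the definition of $c_{2k}(S,B_X,\Lambda)$ that $\sup_{p\in M_k\cap B_X}\Vert Sp\Vert\ge c_{2k}(S,B_X,\Lambda)$, which is what the argument requires. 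The paper's proof simply asserts ``the same replacement is possible'' for this case, so your casual treatment is no worse, but your offered explanation does not identify (let alone resolve) the actual obstruction. For the non-Hilbert and symmetric cases your argument is correct and essentially identical to the paper's.
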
 

\medskip

\begin{proof}[Proof of Theorem~\ref{thm:info}]
First note that the bound
\begin{equation}\label{eq:UBGelf}
     e_n^{\detnon}(S,F,\Lambda) \,\le\, 2\, c_n(S,F,\Lambda)
\end{equation}
from Proposition~\ref{prop:upper-c} 
also holds 
for the generalized
Gelfand numbers
\[
 c_n(S,F,\Lambda) 
\,:=\,  
\inf_{L_1,\dots,L_{n}\in \Lambda}\, 
\sup_{\substack{f,g\in F: \\ L_k(f)=L_k(g)}}
\frac12\, \bigl\|S(f)-S(g)\bigr\| 
\]
see \cite[Sec.~4.1]{NW1}. 
Hence it suffices to bound the modified Gelfand numbers in terms of the Hilbert numbers
as in Theorem~\ref{thm:widths}.

We proceed as in the proof of Theorem~\ref{thm:widths}.
By the definition of a norming set $\mathcal N\subset Y'$, 
we can choose the functionals $\lambda_k$ in~\eqref{proof1} from $\mathcal N$, 
arguing with a slightly smaller $\eps$ in the previous inequality. 
Hence, the functionals $L_k$ defining $M_k$ are from $\Lambda_S$ and we can choose $f_k, g_k$ such that \eqref{eq:choosep} holds with the modified Gelfand widths. 
This shows that the assertion in the very beginning of the proof of Theorem~\ref{thm:widths} holds with $c_k(S,F)$ replaced by $c_k(S,F,\Lambda)$.

The same replacement is possible for the corresponding assertion in the case that $F$ is the unit ball of a Hilbert space.
We can therefore copy the rest of the proof of Theorem~\ref{thm:widths} with $c_k(S,F)$ replaced by $c_k(S,F,\Lambda)$.\\
\end{proof}

Recall that the minimal errors on the right hand side in Theorem~\ref{thm:info} are for the (much bigger) class of randomized, adaptive algorithms that have access to 
arbitrary linear functionals, see Section~\ref{sec:algorithms}. 
We do not consider the case that $Y$ is a Hilbert space since we believe that norming sets in Hilbert 
spaces are too large to yield an interesting 
generalization of Theorem~\ref{thm:intro}.
Let us also note that 
our proof of Theorem~\ref{thm:intro} only employs information of the form $\Lambda_S$. 
As such, Theorem~\ref{thm:intro} cannot catch the optimal behavior of $e_n^\detnon(S,F)$
if the latter decay faster than $e_{n}^\detnon(S,F,\Lambda_S)$.

\medskip

We discuss a few examples:

\begin{enumerate}[leftmargin=10mm]
\item \emph{Linear information}:\\
The case studied in 
Theorem~\ref{thm:intro} corresponds to 
$\mathcal N=B_{Y'}$
and $\Lambda=X'$
which 
satisfy the assumptions of the theorem. 
The class $B_{Y'}$ is a norming set by the Hahn-Banach theorem.

\medskip

\item \emph{Uniform approximation (and proof of Theorem~\ref{thm:intro-std})}: \label{ex:uniform}\\
We consider $X=Y=B(D)$ and 
$S
= \APP_{\infty}$, 
i.e., the identity on $B(D)$, and  
observe that 
$\Lambda^{\rm std}:=\{\delta_x\co x\in D\}$ with 
the Dirac functionals $\delta_x(f)=f(x)=Sf(x)$ 
is a norming set of $B(D)$. 
Since $g_n(S,F)= e_{n}^\detnon(S,F,\Lambda^{\rm std})$, 
see~\cite{CW04} or~\cite[Thm.~4.8]{NW1}, 
we obtain Theorem~\ref{thm:intro-std} 
from Theorem~\ref{thm:info}.
A special case is the space of bounded sequences 
$\ell_\infty=B(\N)$, 
where $\Lambda^{\rm std}$ consists of evaluations of coordinates.
Note that the factor~12 from Theorem~\ref{thm:info} can be replaced with~6 in Theorem~\ref{thm:intro-std} since the factor 2 in \eqref{eq:UBGelf} can be removed in this case,
see again \cite[Sec.~4.1]{NW1}.

\medskip

\item \emph{$C^k$-approximation}:\\
We can also apply Theorem~\ref{thm:info} to function recovery 
in $C^k(D)$, the space of $k$-times continuously differentiable functions on a compact domain $D\subset \R^d$.
That is, we consider the identity $S$ on the space $X=Y=C^k(D)$,
which we equip with the norm
\[
 \Vert f \Vert_{C^k(D)} \,:=\,
 \max_{|\alpha|\le k}\ \max_{x\in D}\, 
 \left\vert \frac{\partial^{|\alpha|}
 f(x)}{\partial_{x_1}^{\alpha_1} \cdots 
 \partial_{x_d}^{\alpha_d}} \right\vert. 
\]
Theorem~\ref{thm:info} applies 
for the class $\Lambda$ of point evaluations of derivatives up to order $k$, i.e., for
\[
 \Lambda \,=\, \left\{ \delta_x \circ \frac{\partial^{|\alpha|}
 }{\partial_{x_1}^{\alpha_1} \cdots 
 \partial_{x_d}^{\alpha_d}} \ \Big\vert\ x\in D,\ \vert \alpha\vert \le k  \right\},
\]
which is a norming set on $C^k(D)$.


\end{enumerate}

\medskip

We discuss two implications 
of Theorem~\ref{thm:intro-std}.


Firstly, Theorem~\ref{thm:intro-std} contributes to the question 
on the power of adaption and randomization 
if only function values are available.
Namely, considering algorithms for uniform approximation that only use standard information, 
Theorem~\ref{thm:intro-std} gives that adaption and 
randomization cannot lead to a speed-up larger than 
one, i.e.,
\begin{equation*}\label{eq:maxgainstd}
{\rm rate}\Big(e_n^\detnon(\APP_{\infty},F, \Lambda^{\rm std} )\Big)
\;\ge\; 
{\rm rate}\Big(e_n^{\ranada}(\APP_{\infty} ,F,  \Lambda^{\rm std} )\Big) 
\,-\, 1
\end{equation*}
for any convex and symmetric $F\subset B(D)$.
This is in analogy to our result for linear 
information,
see 
Corollary~\ref{coro:rate}.

Secondly, Theorem~\ref{thm:intro-std} also 
contributes to the question on the power of
standard information compared to arbitrary linear information.
If we consider randomized algorithms for uniform approximation, Theorem~\ref{thm:intro-std} gives that a restriction to standard information causes a loss in the rate of convergence of no more than one, i.e.,
\[
{\rm rate}\Big(e_n^{\ranada}(\APP_{\infty} ,F,  \Lambda^{\rm std} )\Big) 
\;\ge\; {\rm rate}\Big(e_n^\ranada(\APP_{\infty},F
)\Big) \,-\, 1
\]
for any convex and symmetric $F\subset B(D)$.
The analogous result for deterministic algorithms has been proven in~\cite{Novak}.
This has recently been improved to
\begin{equation}\label{eq:loss-det}
{\rm rate}\Big(e_n^{\detada}(\APP_{\infty} ,F,  \Lambda^{\rm std} )\Big) 
\;\ge\; {\rm rate}\Big(e_n^\detada(\APP_{\infty},F
)\Big) \,-\, 1/2,
\end{equation}
see \cite{KPUU24}.
Note that we even have 
equality of the rates if $F$ is the unit ball of a certain kind of reproducing kernel 
Hilbert space, see \cite{GW23,KPUU23}.

It is an interesting open problem whether 
\eqref{eq:loss-det} also holds
in the randomized setting, 
and to what extent 
the results above hold for more 
general problems $S\in\mathcal{L}(X,Y)$.

\medskip

\begin{remark}[Sampling numbers in $L_2$]
Another problem where several new bounds have been 
obtained recently, is the 
case that $S=\APP_2$, i.e., the 
embedding of $X$ into 
the space $Y=L_2$. 
In this case, there are various upper bounds 
for the error of non-adaptive 
algorithms based on function values in terms of the Kolmogorov numbers $d_n(S,F)$,  
see \cite{DKU,KU1,KU2,NSU,T20} for deterministic
and \cite{CD,K19,WW07} for randomized algorithms.
On the other hand, 
the bound \eqref{eq:dn} 
and Lemma~\ref{lemma92} 
give an upper bound on $d_n(S,F)$ in terms of the 
error of adaptive randomized algorithms.
Hence, we may derive 
several bounds on the 
maximal gain of adaption and/or 
randomization for the problem of 
sampling recovery in $L_2$.

We only mention the special case that 
$F$ is the unit ball of a reproducing 
kernel Hilbert space $X=H$ with finite trace.
Using that
${\rm rate}(e_n^{\detnon}(\APP_2 ,B_H,  \Lambda^{\rm std} ))={\rm rate}(c_n(\APP_2 ,B_H))$ from \cite[Corollary~1]{KU1} together 
with Lemma~\ref{lemma92} and Proposition~\ref{prop1}, 
we obtain that 
\[
{\rm rate}\Big(e_n^\detnon(\APP_2,B_H, \Lambda^{\rm std} )\Big)
\;=\;{\rm rate}\Big(e_n^{\ranada}(\APP_2 ,B_H )\Big).
\]
That is, linear sampling algorithms are optimal (in the sense of order) 
among arbitrary adaptive, randomized algorithms that may use general linear information.

\end{remark}

\begin{remark}[Exponential decay]
For many classes $F$ of smooth functions, the $n$-th minimal error has a super-polynomial decay
and Theorem~\ref{thm:intro-std} 
together with \eqref{eq:exponential}
implies a bound of the form
\[
g_{cn}^{\rm lin}(\APP_{\infty},F) 
\;\le\; e_n^\ranada(\APP_{\infty},F)
\]
for all $n\ge n_0$,
where $n_0\in\N$ and $c\ge 1$ are (relatively small) constants.
One such example is given by reproducing kernel Hilbert spaces with a Gaussian kernel, see, e.g., \cite[Thm~1.1]{KS24}.
This means that, for all such examples, there is no need for sophisticated algorithms that use randomization, adaption or general linear information, at least from the viewpoint of information complexity. In comparison to deterministic and non-adaptive algorithms that only use function evaluations, at most a factor $c$ can be gained.
\end{remark}

\medskip

\goodbreak

\medskip

\noindent \textbf{Acknowledgement.} \ 
We gratefully acknowledge the support of the Leibniz Center for Informatics,
where several discussions on this research were held during the Dagstuhl Seminar
\emph{Algorithms and Complexity for Continuous Problems} (Seminar ID~23351).
This research was funded in whole or in part by the Austrian Science Fund (FWF) grant M~3212-N. 
For open access purposes, the authors have applied a CC BY public copyright
license to any author-accepted manuscript version
arising from this submission. 
MU is supported by the Austrian Federal Ministry of Education, Science and Research via the Austrian Research Promotion Agency (FFG) through the project FO999921407 (HDcode) funded by the European Union via NextGenerationEU.

\medskip


\bigskip
\noindent
\address{D.K., Faculty of Computer Science and Mathematics, University of Passau, Germany; \texttt{david.krieg@uni-passau.de};  \\
E.N., Institute of Mathematics, Friedrich Schiller University Jena, 
Germany; \texttt{erich.novak@uni-jena.de}; \\
M.U., Institute of Analysis
\& Department of Quantum Information and Computation, 
Johannes Kepler University Linz, Austria;\\ \texttt{mario.ullrich@jku.at}
}

\end{document}